\theoremstyle{plain}
\newtheorem{theorem}{Theorem}[section] 
\newtheorem{conjecture}[theorem]{Conjecture}
\newtheorem{corollary}[theorem]{Corollary}
\newtheorem{proposition}[theorem]{Proposition}
\newtheorem{lemma}[theorem]{Lemma}
\newtheorem*{problem2}{\bf Problem A}
\theoremstyle{remark} 
\newtheorem{remark}[theorem]{Remark}
\newtheorem{example}[theorem]{Example}
    \def\subsection{\@startsection{subsection}{2}%
    \z@{.5\linespacing\@plus.7\linespacing}{.3\linespacing}%
    {\normalfont\bfseries}}
    \newcommand{\LeftEqNo}{\let\veqno\@@leqno}
\numberwithin{equation}{section}
\def\B{\mathcal B}
\def\G{\mathcal G}
\def\H{\mathcal H}
\def\M{\mathcal M}
\def\N{\mathcal N}
\begin{document}

\




\title{Density of irreducible operators in the trace-class norm}

\author[J. Fang]{Junsheng Fang}
\address{Junsheng Fang, School of Mathematical Sciences, Hebei Normal University, Shijiazhuang, 050024, China}
\email{jfang@hebtu.edu.cn}

\author[C. Jiang]{Chunlan Jiang}
\address{Chunlan Jiang, School of Mathematical Sciences, Hebei Normal University, Shijiazhuang, 050024, China}
\email{cljiang@hebtu.edu.cn}

\author[M. Ma]{Minghui Ma}
\address{Minghui Ma, School of Mathematical Sciences, Dalian University of Technology, Dalian, 116024, China}
\email{minghuima@dlut.edu.cn}

\author[J. Shen]{Junhao Shen}
\address{Junhao Shen, Department of Mathematics \& Statistics, University of New Hampshire, Durham, 03824, US}
\email{Junhao.Shen@unh.edu}

\author[R. Shi]{Rui Shi}
\address{Rui Shi, School of Mathematical Sciences, Dalian University of Technology, Dalian, 116024, China}
\email{ruishi@dlut.edu.cn, ruishi.math@gmail.com}
\thanks{This article was partly supported by Tianyuan Mathematics Research Center. Chunlan Jiang and Junsheng Fang were partly supported by the Hebei Natural Science Foundation (No.A2023205045). Minghui Ma was partly supported by the China Postdoctoral Science Foundation (No.2025M783077) and the Postdoctoral Fellowship Program of CPSF (No.GZC20252022). Rui Shi 
was partly supported by NSFC (No.12271074) and a research fund from the Math Department of HEBNU for visiting scholars. }

\author[T. Wang]{Tianze Wang}
\address{Tianze Wang, School of Mathematical Sciences, Dalian University of Technology, Dalian, 116024, China}
\email{swan0108@mail.dlut.edu.cn}

\subjclass[2020]{Primary 46L10; Secondary 47C15}

\keywords{Irreducible operator,  trace-class operator, von Neumann algebra, relative normalizing set, type $\mathrm{II}_1$ factor.}

\begin{abstract}
	In operator theory, a long-standing open problem concerns the density of irreducible operators on a separable complex Hilbert space $\mathcal{H}$ with respect to the trace-class norm. This line of research can be traced back to Halmos’ work on the density of irreducible operators in the operator norm topology.

	In this paper, for a large family of operators in $\mathcal{B}(\mathcal{H})$, we give this problem an affirmative answer.
	The result is derived from a combination of techniques in both operator theory and operator algebras. We also discover that there is a strong connection between this problem and an operator-theoretical problem related to type $\mathrm{II}_1$ factors. 
    
    Moreover, we reduce the above problem to the following form. For each operator $T$ in $\mathcal{B}(\mathcal{H})$ and every $\varepsilon>0$, is there a trace-class operator $K$ with $\|K\|_1<\varepsilon$ such that $T+K$ is a direct sum of at most countably many irreducible operators?

\end{abstract}

\maketitle

\section{Introduction}\label{section introduction}

Throughout this paper, let $\mathcal{H}$ be a separable infinite-dimensional complex Hilbert space, and let $\mathcal{B}(\mathcal{H})$ denote the algebra of all bounded linear operators on $\mathcal{H}$.
Recall that an operator $T$ in $\mathcal{B}(\mathcal{H})$ is {\em irreducible} if it has no nontrivial reducing subspaces.
That is to say, if $P$ is a projection (i.e., $P^2=P=P^*$) in $\mathcal{B}(\mathcal{H})$ such that $PT=TP$, then either $P=0$ or $P=I$.
By definition, the irreducibility of operators is invariant up to unitary equivalence.
We present the long-standing problem as follows.

\begin{problem2}\label{prob A}
For each operator $T$ in $\mathcal{B}(\mathcal{H})$ and $\varepsilon>0$, is there a trace-class operator $K$  in $\mathcal{B}(\mathcal{H})$  with $\|K\|_1<\varepsilon$ such that $T+K$ is irreducible?
\end{problem2}

Problem A can be traced back to a result of Paul Halmos in \cite{Hal}.
Next, we briefly recall Halmos' result about irreducible operators, the contributions of various authors related to Problem A, the main techniques applied previously, and the reason why the Weyl-von Neumann theorem fails to contribute to the solution of Problem A.

\subsection{Density problem of irreducible operators} 

By definition, irreducible operators can be viewed as atoms to construct operators in $\mathcal{B}(\mathcal{H})$.
Thus, it is natural to explore how large the set of irreducible operators is in the topological sense.
In the operator norm topology, Paul Halmos proved that irreducible operators form a dense $G_\delta$ subset of $\mathcal{B}(\mathcal{H})$ in \cite{Hal}.
Later, Heydar Radjavi and Peter Rosenthal gave a short proof in \cite{Rad}.
It turns out that, on considering the operator norm density of the set of irreducible operators, one needs the classical form of the spectral theorem for self-adjoint operators and matrix-construction techniques.

In the last paragraph of \cite[Section 1]{Hal}, Ronald Douglas observed that by virtue of the Weyl-von Neumann theorem, {\em Halmos' density theorem is also true in the sense of Hilbert-Schmidt approximation}.
To improve the result with the Schatten $p$-norm \cite{Schatten1960}, one needs a type of the Weyl-von Neumann theorem for self-adjoint operators as a key technique.
In the following part, we denote by $\Vert\cdot\Vert_p$-norm the Schatten $p$-norm for $p\geqslant 1$.
Note that the Schatten $2$-norm is the Hilbert-Schmidt norm, while the Schatten $1$-norm is the trace-class norm.

The classical {\em Weyl-von Neumann theorem} for self-adjoint operators in $\mathcal{B}(\mathcal{H})$ due to Hermann Weyl \cite{Weyl} and John von Neumann \cite{Von2} states that every self-adjoint operator is diagonalizable up to an arbitrarily small Hilbert-Schmidt perturbation.

In \cite{Kuroda}, Shige Toshi Kuroda improved the Weyl-von Neumann theorem by proving that every self-adjoint operator in $\mathcal{B}(\mathcal{H})$ is diagonalizable up to an arbitrarily small $\Phi$-norm perturbation, where by $\Phi$-norm we denote a unitarily invariant norm not equivalent to the trace-class norm.
Note that the $\Vert\cdot\Vert_p$-norm serves as a candidate for such a unitarily invariant norm for every $p> 1$. On the other hand, Dan Voiculescu \cite{Voi79} established a type of Weyl-von Neumann theorem for an $n$-tuple of commuting self-adjoint operators with his $\mathcal{C}^{-}_{n}$-perturbation ($n\geqslant 2$), by using his remarkable noncommutative Weyl-von Neumann theorem \cite{Voi76}. This line of research later evolved into a profound theory of normed ideal perturbations \cite{Voi81,DA2014,Voi2016,Voi2018,Voi2019,Voi2024,Voi2025},  which is also closely connected to the Kato-Rosenblum theorem \cite{Kato, Rosenblum}.

According to the Weyl-von Neumann-Kuroda theorem in \cite{Kuroda} and techniques of H.\,Radjavi and P.\,Rosenthal in \cite{Rad}, Domingo Herrero proved in \cite[Lemma 4.33]{Herrero1} that {\em the set of irreducible operators is $\Vert\cdot\Vert_p$-norm dense in $\mathcal{B}(\mathcal{H})$ for every $p>1$}.

\subsection{Schatten 1-norm perturbations of self-adjoint operators}

The line of research on the density of the set of irreducible operators with respect to the $\Vert \cdot \Vert_1$-norm would be intact if the Weyl-von Neumann theorem held for the $\Vert \cdot \Vert_1$-norm.
But, with respect to the $\Vert\cdot\Vert_1$-norm, a large family of self-adjoint operators fails to be diagonalizable up to trace-class perturbation. According to \cite{Kato, Rosenblum}, Tosio Kato and Marvin Rosenblum (independently) showed that, up to unitary equivalence, the {\em $(spectrally)$ absolutely continuous part} of a self-adjoint operator in $\mathcal{B}(\mathcal{H})$ is stable under self-adjoint trace-class perturbations.
Additionally, in \cite{CP}, Richard Carey and Joel Pincus showed that each purely singular self-adjoint operator in $\mathcal{B}(\mathcal{H})$  is a small trace-class perturbation of a diagonal operator.

By the Kato-Rosenblum theorem, the method in the proof of \cite[Lemma 4.33]{Herrero1} with the Weyl-von Neumann-Kuroda theorem fails to work for $\Vert\cdot\Vert_1$-norm. Thus, to investigate Problem A, it is necessary to develop new methods and techniques.

From the perspective mentioned above, one might encounter intrinsic difficulties when considering perturbation problems related to trace-class operators. In fact, the set of trace-class operators plays a crucial role in certain problems within operator theory, operator algebras, and scattering theory (see \cite{Han01, Simon01, Sukochev01}).

In this paper, for a large family of operators in $\mathcal{B}(\mathcal{H})$, we answer \textbf{Problem A} affirmatively.
Based on the discussion in \Cref{subsection II1-factor}, we discover that Problem A is related to the generator problem for type $\mathrm{II}_1$ factors. Thus, we propose \Cref{conj II1-factor} and prove \Cref{thm main} ({\bf Main theorem}). It is worth mentioning that while the generator problem remains unsolved for general type $\mathrm{II}_1$ factors, substantial progress has already been made toward its solution. For an overview of this problem, we refer to the book by Allan Sinclair and Roger Smith \cite[Chapter 16]{Sinclair&Smith}, as well as several papers published after this book \cite{DSSW,DSZ2015,Sherman2012,Popa2021}.

\subsection{Main theorem and an outline of the proof}

For an operator $T$  in $\mathcal{B}(\mathcal{H})$, we denote by $W^*(T)$ the von Neumann algebra generated by  $T$, by $\operatorname{Re} T$ the real part of $T$, and by $\operatorname{Im} T$ the imaginary part of $T$. A vector $\xi$ in $\mathcal{H}$ is {\em generating} or {\em cyclic} for a von Neumann algebra $\mathcal{M}$ if the set $\mathcal{M}\xi$ is dense in $\mathcal{H}$. If $\mathcal{M}$ has a cyclic vector, then $\mathcal{M}$ is said to be \emph{cyclic}.

\begin{conjecture}\label{conj II1-factor}
Suppose that $T$ is an operator in $\mathcal{B}(\mathcal{H})$ such that $W^*(T)$ is a type $\mathrm{II}_1$ factor.
Then for every $\varepsilon>0$, there exists a trace-class operator $K$ in $\mathcal{B}(\mathcal{H})$ with $\|K\|_1<\varepsilon$ such that $T+K$ is a direct sum of at most countably many irreducible operators.
\end{conjecture}

For simplicity, let $\mathrm{IR}({\mathcal{H}})$ be \emph{the set of irreducible operators} in $\mathcal{B}(\mathcal{H})$ and $\overline{\mathrm{IR}({\mathcal{H}})}^{\Vert\cdot\Vert_1}$ the closure of $\mathrm{IR}({\mathcal{H}})$ with respect to the trace-class norm topology. With Conjecture \ref{conj II1-factor}, we prove the following result in this paper.

\begin{theorem}[\textbf{Main Theorem}]\label{thm main}
    The following statements are equivalent:
    \begin{enumerate}
        \item [$(1)$]  $\overline{\mathrm{IR}({\mathcal{H}})}^{\Vert\cdot\Vert_1} = \mathcal{B}(\mathcal{H})$; 
        \item [$(2)$]  Each generator of a cyclic type $\mathrm{II}_1$ factor on $\mathcal{H}$ is in $\overline{\mathrm{IR}({\mathcal{H}})}^{\Vert\cdot\Vert_1}$; 
        \item [$(3)$]  Conjecture {\rm \ref{conj II1-factor}} is true.
    \end{enumerate}
\end{theorem}

As part of our results to show that $\overline{\mathrm{IR}({\mathcal{H}})}^{\Vert\cdot\Vert_1}$ is topologically large, each of the following subsets of $\mathcal{B}(\mathcal{H})$ is a subset of $\overline{\mathrm{IR}({\mathcal{H}})}^{\Vert\cdot\Vert_1}$:
\begin{enumerate}
    \item [$(a)$] $\{T: W^*(T) \text{ of finite type I}\}$,
    \item [$(b)$] $\{T: W^*(T) \text{ of type }\mathrm{II}_1  \text{ with nontrivial center}\}$,
    \item [$(c)$] $\{T: W^*(T) \text{ a type }\mathrm{II}_1 \text{ factor, } W^*(\operatorname{Re}T) \text{ a Cartan subalgebra} \}$,
    \item [$(d)$] $\{T: W^*(T) \text{ a factor with } W^*(\operatorname{Re}T) \text{ not diffuse}\}$,
    \item [$(e)$] $\{T: W^*(\operatorname{Re}T) \text{ a masa of } \mathcal{B}(\mathcal{H})\}$,
\end{enumerate}
where $(a)$ is from \Cref{prop-type-I},  $(b)$ is from \Cref{prop type-II},  $(c)$ is from \Cref{prop Cartan},  $(d)$ is from \Cref{cor factor CP=I}, and $(e)$  is from \Cref{cor masa}.

For the reader’s convenience, we will outline the method to prove the {\bf Main Theorem}. 
Note that $(1)\Rightarrow(2)\Rightarrow(3)$ is clear. We only need to prove  $(3)\Rightarrow(1)$. Before proceeding, we briefly recall the \emph{type decomposition theorem} for von Neumann algebras.
For a von Neumann algebra $\mathcal{M}$, by \cite[Theorem 6.5.2]{Kadison2}, there exist {\em central} projections $P_{{\rm I}_n}$ ($n\geqslant 1$), $P_{{\rm I}_\infty}$, $P_{{\rm II}_1}$, $P_{{\rm II}_\infty}$, and $P_{{\rm III}}$, with sum $I$, such that $\mathcal{M}$ can be expressed as a direct sum of von Neumann algebras in the form
\begin{equation}\label{equ type-decomposition}
  \mathcal{M}=\left(\bigoplus_{n=1}^{\infty}\mathcal{M}P_{{\rm I}_n}\right)
  \oplus\mathcal{M}P_{{\rm I}_\infty}\oplus\mathcal{M}P_{{\rm II}_1}
  \oplus\mathcal{M}P_{{\rm II}_\infty}\oplus\mathcal{M}P_{{\rm III}},
\end{equation}
where  $\mathcal{M}P_{{\rm I}_n}$ is of type ${\rm I}_n$ or $P_{{\rm I}_n}=0$, $\mathcal{M}P_{{\rm I}_\infty}$ is of type ${\rm I}_\infty$ or $P_{{\rm I}_\infty}=0$, $\mathcal{M}P_{{\rm II}_1}$ is of type ${\rm II}_1$ or $P_{{\rm II}_1}=0$, $\mathcal{M}P_{{\rm II}_\infty}$ is of type ${\rm II}_\infty$ or $P_{{\rm II}_\infty}=0$, and $\mathcal{M}P_{{\rm III}}$ is of type ${\rm III}$ or $P_{{\rm III}}=0$.
The reader is referred to \cite[Definition 6.5.1]{Kadison2} for a discussion of different types of von Neumann algebras.
For the sake of simplicity, we denote by $\mathcal{M}_{{\rm I}_{f}}$ the direct sum $\bigoplus_{n=1}^{\infty}\mathcal{M}P_{{\rm I}_n}$, which is sometimes referred to as a {\em finite type} $\mathrm{I}$ von Neumann algebra.
Also, denote by $\mathcal{M}_{\infty}$ the direct sum $\mathcal{M}P_{{\rm I}_\infty}\oplus\mathcal{M}P_{{\rm II}_\infty}\oplus\mathcal{M}P_{{\rm III}}$, which is a {\em properly infinite} von Neumann algebra (see \cite[Definition 6.3.1]{Kadison2}).
Thus, we can rewrite the decomposition in \eqref{equ type-decomposition} as
\begin{equation}\label{equ type-decomposition-2}
  \mathcal{M}=\mathcal{M}_{{\rm I}_{f}}\oplus\mathcal{M}P_{{\rm II}_1}
  \oplus\mathcal{M}_\infty.
\end{equation}

The method to prove the {\bf Main Theorem} is listed below in four steps.

{\bf Step 1.}
For an operator $T$ in $\mathcal{B}(\mathcal{H})$, we write $T=A+iB$, where $A$ and $B$ are self-adjoint operators.
By \Cref{lem Re+C1}, there exists an arbitrarily small self-adjoint trace-class operator $K_A$ such that $\Tilde{A}:=A+K_A$ and $B$ are in the form
\begin{equation}\label{equ Re+C1}
  \Tilde{A}:=
  \begin{pmatrix}
    \alpha & 0 & 0 & 0 \\
    0 & A_1 & 0 & 0 \\
    0 & 0 & A_2 & 0 \\
    0 & 0 & 0 & A_\infty
  \end{pmatrix}, \quad
  B:=
  \begin{pmatrix}
    \beta & \xi^*_1 & \xi^*_2 & \xi_\infty^* \\
    \xi_1 & B_1  & 0 & 0 \\
    \xi_2 & 0 & B_2 & 0 \\
    \xi_\infty & 0 & 0 & B_\infty
  \end{pmatrix}
  \begin{array}{l}
    \operatorname{ran}E\\
    \mathcal{H}_1 \\
    \mathcal{H}_2 \\
    \mathcal{H}_\infty \\
  \end{array}.
\end{equation}
The notation in \eqref{equ Re+C1} is explained as follows.
\begin{enumerate}
\item[$(1)$] $\alpha$ is an isolated eigenvalue of $A+K_A$ with multiplicity $1$ and $\beta\in\mathbb{R}$.

\item[$(2)$] $E$ is the spectral projection for $A+K_A$ corresponding to $\{\alpha\}$.

\item[$(3)$] $\xi_j$ is a vector in a column form and $\xi_j^*$ is the conjugate vector of $\xi_j$ in a row form for $j=1,2,\infty$.

\item[$(4)$] Let $X:=(I-E)(T+K_A)(I-E)$ be an operator on $\operatorname{ran}(I-E)$.
    According to the decomposition mentioned in \eqref{equ type-decomposition-2},  there are (mutually orthogonal) central projections $E_1$, $E_2$, and $E_\infty$ in $W^*(X)$, with sum $I-E$, such that $W^*(X)$ can be expressed as
\begin{equation}\label{equ X}
  W^*(X)=W^*(X_1)\oplus W^*(X_2)\oplus W^*(X_\infty),
\end{equation}
where $X_j:=XE_j$ for $j= 1,2,\infty$, and $W^*(X_1)$ is of finite type $\mathrm{I}$ or $E_1=0$, $W^*(X_2)$ is of type $\mathrm{II}_1$ or $E_2=0$, and $W^*(X_\infty)$ is properly infinite or $E_\infty=0$.
Correspondingly, $W^*(X_j)$ acts on $\mathcal{H}_j:=E_j\mathcal{H}$ for $j=1,2,\infty$.

\item[$(5)$] Write $A_j:=\operatorname{Re} X_j$ and $B_j:=\operatorname{Im} X_j$ for $j=1,2,\infty$.
\end{enumerate}

{\bf Step 2.}
In \eqref{equ Re+C1}, if $\mathcal{H}_1\ne 0$, then we prove in \Cref{prop-type-I} that there is an arbitrarily small trace-class operator $K_1$ in $\mathcal{B}(\mathcal{H}_1)$ such that $(A_1+iB_1)+K_1$ is  irreducible on $\mathcal{H}_1$.

{\bf Step 3.}
For each properly infinite von Neumann algebra, we prove in \Cref{lem generating-PI-VNA} that the set of generating vectors is dense. Then, we develop a method to construct irreducible operators in \Cref{lemma pre-main}, which serves for the proof of \Cref{thm main}.

{\bf Step 4.}
Assume that \Cref{conj II1-factor} is true.
Based on the above steps, we prove that $A+ iB$ can be expressed as an irreducible operator on $\mathcal{H}$ up to an arbitrarily small trace-class perturbation.

Above all, to prove the {\bf Main Theorem}, we will employ operator approximation theory with respect to the trace-class norm, single generator techniques in $\mathcal{B}(\mathcal{H})$, and methods from the theory of von Neumann algebras.

The paper is organized as follows. In Sections \ref{section-2.1} and \ref{subsection Pre-lemma}, we prepare some valuable tools. In Section \ref{subsection-2.3}, we consider single generators of properly infinite von Neumann algebras and generating vectors. \Cref{lemma pre-main} will be applied directly in the proof of \Cref{thm main}.  In \Cref{section finite-VNA}, we mainly focus on single generators of finite von Neumann algebras. In Section \ref{subsection finite}, we introduce the \emph{atomic support} for an abelian von Neumann algebra and develop a key tool in \Cref{lem relative-commutant} by \Cref{lem A-in-M}. In \Cref{subsection II1-vNalg}, we  consider the class of operators $T$ with $C_P=I$ in \Cref{lem CP=I}, where $C_P$ is the central support of the atomic support $P$ of $W^*(\operatorname{Re}T)$. In particular, if $W^*(T)$ is a factor with $W^*(\operatorname{Re}T)$ not diffuse, then $T \in \overline{\mathrm{IR}({\mathcal{H}})}^{\Vert\cdot\Vert_1}$.  Then we consider the case for $C_P<I$ in \Cref{lem CP<I}. These two lemmas yield \Cref{cor CP}, where we prove that every operator generating a diffuse finite von Neumann algebra with  nontrivial center is in $\overline{\mathrm{IR}({\mathcal{H}})}^{\Vert\cdot\Vert_1}$. As an application, in \Cref{prop-type-I}, we prove that every operator generating a finite type I von Neumann algebra is in $\overline{\mathrm{IR}({\mathcal{H}})}^{\Vert\cdot\Vert_1}$. So is every operator generating a type $\mathrm{II}_1$ von Neumann algebra with nontrivial center, by \Cref{prop type-II}. In \Cref{subsection II1-factor}, we introduce the relative normalizing set in \eqref{relative-normalizing-set} for a diffuse von Neumann subalgebra. With this concept, we develop another key tool in \Cref{lem normalizer}, which yields \Cref{prop weak-Cartan}. In \Cref{section proof}, we prove \Cref{thm main}.

\section{Preliminaries}\label{section preliminary}

\subsection{Classical tools to construct irreducible operators} \label{section-2.1}

To avoid confusion in later sections, for two vectors $e$ and $f$ in $\mathcal{H}$, we denote by $e\otimes f$ a tensor product vector in $\mathcal{H}\otimes\mathcal{H}$ and by $e\hat{\otimes} f$ we denote the rank-one operator acting on $\mathcal{H}$ defined by
\begin{equation} \label{equ rank-one}
  (e\hat{\otimes}f)(h)=\langle h,f\rangle e\quad\text{for all}~h\in\mathcal{H}.
\end{equation}
When no confusion can arise, for a vector $e$ in $\mathcal{H}$, we denote by $\|e\|:=\langle e,e\rangle^{\frac{1}{2}}$ the vector norm of $e$ and denote by $\mathrm{Tr}$ the standard trace on the set of trace-class operators.
In particular, if $e$ is a unit vector, then the rank-one operator $e\hat{\otimes}e$ is a projection and $\mathrm{Tr}(e\hat{\otimes}e)=1$.

An operator $T$ in $\mathcal{B}(\mathcal{H})$ is \emph{diagonal} if there is a family of mutually orthogonal projections $\{P_j\}^{N}_{j=1}$ with sum $I$ and a family of complex numbers $\{\lambda_j\}^{N}_{j=1}$ such that $T = \sum^{N}_{j=1} \lambda_j P_j$, where $N$ may be infinite. 
By a result of R. Carey and J. Pincus \cite[Lemma 1]{CP} and the Kato-Rosenblum theorem, a self-adjoint operator $A$ in $\mathcal{B}(\mathcal{H})$ equals its singular part if and only if for every $\varepsilon>0$ there is a self-adjoint trace-class operator $K$ with $\|K\|_1<\varepsilon$ such that $A+K$ is diagonal. For simplicity, $A$ is called \emph{purely singular} if  $A$ equals its singular part.  
Note that if $A$ is purely singular, then we can choose $K$ with $\|K\|_1<\varepsilon$ such that $A+K$ is diagonal and each eigenvalue of $A+K$ is of multiplicity $1$.
That is to say, there is an orthonormal basis $\{e_j\}_{j=1}^{\infty}$ of $\mathcal{H}$ such that
\begin{equation} \label{equ A+K}
  A+K=\sum_{j=1}^{\infty}\alpha_j e_j\hat{\otimes} e_j\quad\text{and}\quad
  \alpha_j\ne\alpha_k~\text{for all}~j\ne k.
\end{equation}

By the proofs adopted in \cite[Halmos' theorem]{Rad} and \cite[Lemma 4.33]{Herrero1}, we obtain the following result directly. For completeness, we sketch the  construction of the required trace-class operator.

\begin{lemma}\label{lem diagonalizable}
	Let $T$ be an operator in $\mathcal{B}(\mathcal{H})$ with its real part being purely singular.
	Then for every $\varepsilon>0$, there is a trace-class operator $K$ in $\mathcal{B}(\mathcal{H})$ with $\|K\|_1<\varepsilon$ such that $T+K$ is irreducible in $\mathcal{B}(\mathcal{H})$.
\end{lemma}

\begin{proof}
    Write $T = A+ iB$, where $A$ and $B$ are self-adjoint operators  in $\mathcal{B}(\mathcal{H})$. From \eqref{equ A+K}, we may assume that $A + K_1 = \sum_{j=1}^{\infty}\alpha_j e_j\hat{\otimes} e_j$ and $\alpha_j\ne\alpha_k$ for all $j\ne k$ 
    with $\|K_1\|_1<\varepsilon /2$, where $\{e_j\}_{j=1}^{\infty}$ is an orthonormal basis  of $\mathcal{H}$.
    We define a sequence $\{\delta_j\}_{j=1}^\infty$ of non-negative numbers by
    \begin{equation*}
        \delta_j=
        \begin{cases}
            0, & \mbox{if } \langle Be_{j+1},e_j\rangle\ne 0, \\
            \varepsilon, & \mbox{otherwise}.
        \end{cases}
    \end{equation*}
    Let $K_2$ be a self-adjoint trace-class operator in the form
    \begin{equation*}
        K_2 := \sum_{j=1}^{\infty}\frac{\delta_j}{2^{j+2}}
        (e_j\hat{\otimes}e_{j+1}+e_{j+1}\hat{\otimes}e_j).
    \end{equation*}
    Write $K := K_1 + i K_2$. Thus, it is routine to verify that $\|K\|_1<\varepsilon$ and $T+K$ is irreducible  in $\mathcal{B}(\mathcal{H})$.
\end{proof}

If neither the real part nor the imaginary part of $T$ is purely singular, then we need to develop some techniques in von Neumann algebras for later discussions.

\subsection{Preliminary lemmas in \texorpdfstring{$\mathcal{B}(\mathcal{H})$}{B(H)}}\label{subsection Pre-lemma}

Recall that by $\mathcal{H}$ we denote a separable infinite-dimensional complex Hilbert space.
For an operator $T$ in $\mathcal{B}(\H)$, we denote by $\operatorname{ran} T$ or $T\mathcal{H}$ the {\em range space} of $T$.

In the following \Cref{lemma-diagonal}, we prepare a routine construction.
This lemma will be directly applied in \Cref{lem P-irreducible}.
For an operator $T$ in $\mathcal{B}(\mathcal{H})$, we denote by $\sigma_p(T)$ the {\em point spectrum} of $T$, i.e., the set of all eigenvalues of $T$.
Since $\mathcal{H}$ is separable, $\sigma_p(A)$ is countable for every self-adjoint operator $A$ in $\mathcal{B}(\mathcal{H})$.
For simplicity, in a von Neumann algebra $\mathcal{M}$, a maximal abelian von Neumann subalgebra is always abbreviated as a {\em masa} in $\mathcal{M}$.

\begin{lemma} \label{lemma-diagonal}
	Let $P$ be a nonzero projection on $\mathcal{H}$, $D$ a diagonal operator on $\operatorname{ran} P$, and $\Sigma$ a countable subset of $\mathbb{R}$.
	Then for every $\varepsilon>0$, there is a self-adjoint trace-class operator $K$ in $\mathcal{B}(\mathcal{H})$ of the form
	\begin{equation*}
		K=
		\begin{pmatrix}
    		K_P & 0 \\
    		0 & 0
		\end{pmatrix}
  		\begin{array}{l}
    		\operatorname{ran} P   \\
    		\operatorname{ran} (I-P)
  		\end{array}
	\end{equation*}
	such that
	\begin{enumerate}
		\item[$(1)$] $\|K\|_1<\varepsilon$,

		\item[$(2)$] $\ker K_P=\{0\}$, i.e., $\ker K=\operatorname{ran}(I-P)$,

		\item [$(3)$] $\sigma_p(D + K_P)\cap\Sigma=\varnothing$,

		\item [$(4)$] $W^*(D+K_P)$ is a masa on $\operatorname{ran} P$ which is generated by minimal projections.
	\end{enumerate}
\end{lemma}

\begin{proof}
    Since $D$ is diagonal, there is an orthonormal basis $\{e_j\}_{j=1}^{N}$ for $\operatorname{ran} P$ such that $D$ is in the form $D=\sum_{j=1}^N\alpha_je_j\hat{\otimes}e_j$, where $N$ may be infinite.
    Choose a sequence $\{\delta_j\}_{j=1}^N$ of positive numbers such that for each $j$, we have
    \begin{enumerate}
        \item[$(1)$] $0<\delta_j<\frac{\varepsilon}{2^j}$,
        \item[$(2)$] $\alpha_j+\delta_j\notin\Sigma $,
        \item[$(3)$] $\alpha_j+\delta_j\ne\alpha_k+\delta_k$ for each $k=1,\ldots, j-1$.
    \end{enumerate}
    Define $K_P=\sum_{j=1}^N\delta_je_j\hat{\otimes}e_j$.
    Then $K_P$ is a self-adjoint trace-class operator with $\|K_P\|_1<\varepsilon$ and
    \begin{equation*}
      D+K_P=\sum_{j=1}^N(\alpha_j+\delta_j)e_j\hat{\otimes}e_j,
    \end{equation*}
    where
    \begin{enumerate}
        \item[$(1)$] $\alpha_j+\delta_j\ne\alpha_k+\delta_k $ for all $j\neq k$ and
        \item[$(2)$] $\sigma_p(D+K_P) = \{\alpha_j+\delta_j\}_{j=1}^N $. 
    \end{enumerate}
    It follows that each $e_j\hat{\otimes}e_j$ is in $W^*(D+K_P)$ by the Borel function calculus.
    Clearly, $K$ is an operator with the desired properties. 
\end{proof}

With \Cref{lemma-diagonal}, we can perturb a class of operators $A+iB$ to be irreducible with an arbitrarily small trace-class operator.

\begin{lemma} \label{lem P-irreducible}
    Let $A$ and $B$ be self-adjoint operators in $\mathcal{B}(\mathcal{H})$.
    If $W^*(A)$ contains an infinite-dimensional projection $P$ with $PB=BP$ such that $(A+iB)P$ is irreducible on $P\mathcal{H}$, then for every $\varepsilon>0$, there is a self-adjoint trace-class operator $K$ with $\|K\|_1<\varepsilon$ such that $A+i(B+K)$ is irreducible on $\mathcal{H}$.
\end{lemma}

\begin{proof}
    Let $\mathcal{H}_1=P\mathcal{H}$ and $\mathcal{H}_2=(I-P)\mathcal{H}$.
    Then $\mathcal{H}=\mathcal{H}_1\oplus\mathcal{H}_2$ and we can write
    \begin{equation*}
      A=
      \begin{pmatrix}
        A_{11} & 0 \\
        0 & A_{22}
      \end{pmatrix}\quad\text{and}\quad
      B=
      \begin{pmatrix}
        B_{11} & 0 \\
        0 & B_{22}
      \end{pmatrix}
      \begin{array}{l}
        \mathcal{H}_1 \\
        \mathcal{H}_2
      \end{array}.
    \end{equation*}
    Since $\mathcal{H}_1$ is infinite dimensional, there is a partial isometry $V$ from $\mathcal{H}_1$ onto $\mathcal{H}_2$.
    More precisely, $V$ is a partial isometry in $\mathcal{B}(\mathcal{H})$ such that
    \begin{equation*}
      V^*V\leqslant P\quad\text{and}\quad VV^*=I-P.
    \end{equation*}
    By \Cref{lemma-diagonal}, there is a self-adjoint trace-class operator $K_P$ in $\mathcal{B}(\mathcal{H}_1)$ such that $\|K_P\|_1<\frac{\varepsilon}{2}$ and $\ker K_P=0$.
    Let
    \begin{equation*}
      B_1=\begin{pmatrix}
        B_{11} & K_PV^* \\
        VK_P & B_{22}
      \end{pmatrix}.
    \end{equation*}
    Then $\|B_1-B\|_1<\varepsilon$.
    It suffices to show that $A+iB_1$ is irreducible in $\mathcal{B}(\mathcal{H})$.

    Let $Q$ be a projection commuting with $A+iB_1$.
    We will show that either $Q=0$ or $Q=I$.
    Since $Q$ commutes with $P\in W^*(A)$, $Q$ can be written as a direct sum $Q_1\oplus Q_2$, where $Q_j\in\mathcal{B}(\mathcal{H}_j)$ for $j=1,2$.
    It follows that either $Q_1=0$ or $Q_1=P$ by the irreducibility of $A_{11}+iB_{11}$.
    Without loss of generality, we assume that $Q_1=0$, otherwise we consider $I-Q$.
    Since $QB=BQ$, we have $K_PV^*Q_2=0$.
    Note that $\ker K_P=0$.
    It follows that $V^*Q_2=0$ and hence
    \begin{equation*}
      Q_2=(I-P)Q_2=VV^*Q_2=0.
    \end{equation*}
    Therefore, $Q=0$.
    This completes the proof.
\end{proof}

In the technique lemma below, if the projections $P_1,P_2$ are chosen from $W^*(A)$, then the condition $P_1,P_2\in W^*(A,B+K)$ is automatically true. 
For a subset $\mathcal{S}$ of $\mathcal{B}(\mathcal{H})$, write
\begin{equation*}
    \mathcal{S}':=\{X\in\mathcal{B}(\mathcal{H})\colon XS=SX~\text{for all}~S\in\mathcal{S}\}
\end{equation*}
to be the commutant of $\mathcal{S}$ in $\mathcal{B}(\mathcal{H})$.

\begin{lemma}\label{lem 2-projections}
    Let $A$ and $B$ be self-adjoint operators in $\mathcal{B}(\mathcal{H})$.
    Suppose that $W^*(B)'$ contains two infinite-dimensional projections $P_1$ and $P_2$ with sum $I$ such that
    \begin{equation*}
        P_1,P_2\in W^*(A,B+K)
    \end{equation*}
    for every self-adjoint compact operator $K$ in $\mathcal{B}(\mathcal{H})$.
    Then for every $\varepsilon>0$, there is a self-adjoint trace-class operator $K\in\mathcal{B}(\mathcal{H})$ with $\|K\|_1<\varepsilon$ such that $A+i(B+K)$ is irreducible in $\mathcal{B}(\mathcal{H})$.
\end{lemma}

\begin{proof}
    Let $\mathcal{H}_j=P_j\mathcal{H}$ for $j=1,2$.
    Then we can write $\mathcal{H}=\mathcal{H}_1\oplus\mathcal{H}_2$ and
    \begin{equation*}
      B=
      \begin{pmatrix}
        B_{11} & 0 \\
        0 & B_{22}
      \end{pmatrix}
      \begin{array}{l}
        \mathcal{H}_1 \\
        \mathcal{H}_2
      \end{array}.
    \end{equation*}
    Let $\{e_j\}_{j=1}^\infty$ and $\{f_j\}_{j=1}^\infty$ be orthonormal bases for $\mathcal{H}_1$ and $\mathcal{H}_2$, respectively.
    As in the proof of \Cref{lem diagonalizable}, we define a sequence $\{\delta_j\}_{j=1}^\infty$ of non-negative numbers by
    \begin{equation*}
      \delta_j=
      \begin{cases}
        0, & \mbox{if } \langle B_{11}e_{j+1},e_j\rangle\ne 0, \\
        \varepsilon, & \mbox{otherwise}.
      \end{cases}
    \end{equation*}
    Let
    \begin{equation*}
      K_1=\sum_{j=1}^{\infty}\frac{\delta_j}{2^{j+2}}
      (e_j\hat{\otimes}e_{j+1}+e_{j+1}\hat{\otimes}e_j)
      \quad\text{and}\quad
      K_2=\sum_{j=1}^{\infty}\frac{\varepsilon}{2^{j+2}}f_j\hat{\otimes}e_j.
    \end{equation*}
    It is clear that $\|K_1\|_1\leqslant\frac{\varepsilon}{2}$ and $\|K_2\|_1\leqslant\frac{\varepsilon}{4}$.
    Moreover, we have
    \begin{equation}\label{equ B11}
      \langle(B_{11}+K_1)e_{j+1},e_j\rangle\ne 0\quad\text{for all}~j\geqslant 1.
    \end{equation}
    We define a self-adjoint operator $B_1$ in $\mathcal{B}(\mathcal{H})$ by
    \begin{equation*}
      B_1=
      \begin{pmatrix}
        B_{11}+K_1 & K_2^* \\
        K_2 & B_{22}
      \end{pmatrix}
      \begin{array}{l}
        \mathcal{H}_1 \\
        \mathcal{H}_2
      \end{array}.
    \end{equation*}
    Then $\|B_1-B\|_1 \leqslant\varepsilon$.
    It suffices to show that $A+iB_1$ is irreducible in $\mathcal{B}(\mathcal{H})$.

    By assumption, we have $P_1, P_2\in W^*(A+iB_1)$.
    Then $K_2=P_2B_1P_1$ belongs to $W^*(A+iB_1)$.
    It follows that
    \begin{equation*}
      K_2^*K_2=\sum_{j=1}^{\infty}\frac{\varepsilon^2}{4^{j+2}}e_j\hat{\otimes}e_j
      \in W^*(A+iB_1).
    \end{equation*}
    By means of the Borel function calculus for the positive operator $K_2^*K_2$, we obtain that
    \begin{equation*}
      e_j\hat{\otimes}e_j\in W^*(A+iB_1).
    \end{equation*}
    By considering the operator $(e_j\hat{\otimes}e_j)B_1(e_{j+1}\hat{\otimes}e_{j+1})$, it follows from \eqref{equ B11} that
    \begin{equation}\label{equ e-e}
      e_j\hat{\otimes}e_{j+1}\in W^*(A+iB_1).
    \end{equation}
    Since $K_2(e_j\hat{\otimes}e_j)\in W^*(A+iB_1)$, we see that
    \begin{equation}\label{equ f-e}
      f_j\hat{\otimes}e_j\in W^*(A+iB_1).
    \end{equation}
    Note that $\{e_j\hat{\otimes}e_{j+1}\}_{j=1}^\infty$ and $\{f_j\hat{\otimes}e_j\}_{j=1}^\infty$ generate $\mathcal{B}(\mathcal{H})$ as a von Neumann algebra.
    Therefore, $A+iB_1$ is irreducible in $\mathcal{B}(\mathcal{H})$ by \eqref{equ e-e} and \eqref{equ f-e}.
    This completes the proof.
\end{proof}

The following consequence of \Cref{lem 2-projections} states that if the projection $I-P$ in \Cref{lem P-irreducible} is also infinite dimensional, then we can remove the condition $(A+iB)P$ being irreducible on $P\mathcal{H}$.

\begin{corollary}\label{cor 2-projections}
	Let $A$ and $B$ be self-adjoint operators in $\mathcal{B}(\mathcal{H})$.
	If $W^*(A)$ contains two infinite-dimensional projections $P_1$ and $P_2$ such that
	\begin{equation*}
		P_1+P_2=I\quad\text{and}\quad P_jB=BP_j\quad\text{ for }~j = 1,2,
	\end{equation*}
	then for every $\varepsilon>0$, there is a self-adjoint trace-class operator $K\in\mathcal{B}(\mathcal{H})$ with $\|K\|_1<\varepsilon$ such that $A+i(B+K)$ is irreducible in $\mathcal{B}(\mathcal{H})$.
\end{corollary}

To reveal a tip of the efficiency of \Cref{cor 2-projections}, we provide a short proof of Theorem 4.1 of \cite{Shi3}.

\begin{corollary}\label{cor normal}
	For each normal operator $N$ in $\mathcal{B}(\mathcal{H})$ and $\varepsilon>0$, there is a trace-class operator $K$ in $\mathcal{B}(\mathcal{H})$ with $\|K\|_1<\varepsilon$ such that $N+K$ is irreducible in $\mathcal{B}(\mathcal{H})$.
\end{corollary}

\begin{proof}
	Write $N=A+iB$, where $A$ and $B$ are self-adjoint operators in $\mathcal{B}(\mathcal{H})$.
	If $W^*(A)$ is finite dimensional, then $A$ is diagonal. We finish the proof by applying \Cref{lem diagonalizable}.

	If $W^*(A)$ is infinite dimensional, then there is a sequence $\{E_n\}_{n=1}^\infty$ of nonzero projections in $W^*(A)$ with sum $I$.
	Define a projection $P$ in the form $P:= \sum_{n=1}^{\infty}E_{2n}$.
	It follows that $P$ and $I-P$ are both infinite-dimensional projections in $\mathcal{B}(\mathcal{H})$.
	Thus, the proof is completed by applying \Cref{cor 2-projections}.
\end{proof}

Note that $A+iB$ is normal if and only if $W^*(A)\subseteq W^*(A+iB)'$.
Thus, it is natural to consider {\bf Problem A} for operators $A+iB$ satisfying the reverse inclusion $W^*(A+iB)'\subseteq W^*(A)$.
Before proceeding to the following \Cref{prop reverse}, we make an observation in \Cref{rem reverse}.

\begin{remark}\label{rem reverse}
	For any self-adjoint operators $A$ and $B$ in $\B(\H)$, it is obvious to have the inclusion $W^*(A+iB)'\subseteq W^*(A)'$.
	Moreover, assume that $W^*(A)$ is a masa of $\mathcal{B}(\mathcal{H})$, which is equivalent to the inclusion $W^*(A)'\subseteq W^*(A)$.
	The two inclusions imply that
	\begin{equation}\label{equ reverse}
  		W^*(A+iB)' \subseteq W^*(A).
	\end{equation}
	Generally speaking, besides the set of operators $A+iB$ with $W^*(A)$ a masa, there is also a large family of operators satisfying \eqref{equ reverse}, such as irreducible operators.
\end{remark}

As an application of \Cref{cor 2-projections}, we obtain the following proposition.

\begin{proposition}\label{prop reverse}
	Let $A$ and $B$ be self-adjoint operators in $\mathcal{B}(\mathcal{H})$ such that
	\begin{equation*}
	W^*(A+iB)'\subseteq W^*(A).
	\end{equation*}
	Then for every $\varepsilon>0$, there exists a self-adjoint trace-class operator $K$ with $\|K\|_1<\varepsilon$ such that $A+i(B+K)$ is irreducible in $\mathcal{B}(\mathcal{H})$.
\end{proposition}

\begin{proof}
	Since $W^*(A)$ is an abelian von Neumann algebra, the hypothesis entails that $W^*(A+iB)'$ is also an abelian von Neumann algebra.

	If $W^*(A+iB)'$ is finite dimensional, then there is an infinite-dimensional minimal projection $P$ in $W^*(A+iB)'$.
	It follows that $(A+iB)P$ is irreducible on $P\mathcal{H}$.
	Thus, by applying \Cref{lem P-irreducible}, there exists a self-adjoint trace-class operator $K$ in $\mathcal{B}(\mathcal{H})$ with $\|K\|_1<\varepsilon$ such that $A+i(B+K)$ is irreducible.

	If $W^*(A+iB)'$ is infinite dimensional, then there exists a sequence $\{E_n\}_{n=1}^\infty$ of nonzero projections in $W^*(A+iB)'$ such that $I= \sum_{n=1}^{\infty}E_n$.
	Write $P:= \sum_{n=1}^{\infty}E_{2n}$.
	It follows that both $P$ and $I-P$ are infinite-dimensional projections in $\mathcal{B}(\mathcal{H})$.
	Thus by applying \Cref{cor 2-projections}, there exists a self-adjoint trace-class operator $K$ in $\mathcal{B}(\mathcal{H})$ with $\|K\|_1<\varepsilon$ such that $A+i(B+K)$ is irreducible.
    
	The above two cases complete the proof.
\end{proof}

By \Cref{prop reverse}, we have a direct corollary.

\begin{corollary}\label{cor masa}
	Let $A$ and $B$ be self-adjoint operators in $\mathcal{B}(\mathcal{H})$ such that $W^*(A)$ is a masa of $\mathcal{B}(\mathcal{H})$.
	Then for every $\varepsilon>0$, there exists an irreducible operator $Y$ in $\mathcal{B}(\mathcal{H})$ such that
	\begin{equation*}  
		\|(A+iB)-Y\|_1<\varepsilon.
	\end{equation*}
\end{corollary}

\begin{remark}\label{rem masa}
	One may think that if for each self-adjoint operator $A$ in $\mathcal{B}(\mathcal{H})$ there exists an arbitrarily small self-adjoint trace-class operator $K$ such that $W^*(A+K)$ is a masa of $\mathcal{B}(\mathcal{H})$, then {\bf Problem A} can be solved completely by applying Corollary $\ref{cor masa}$.
	But the thought fails to work.
	We provide such a self-adjoint operator without a proof.
	Let $M_t$ be the multiplication operator on $L^2[0,1]$ defined by $(M_tf)(t):=t\cdot f(t)$ for every $f\in L^2[0,1]$.
	Let $\mathcal{H}=L^2[0,1]\oplus L^2[0,1]$ and $A:= M_t\oplus M_t$.
	Clearly, $W^*(A)$ is not a masa in $\mathcal{B}(\mathcal{H})$.
	By Theorem $5.2.5$ of \cite{Li2}, for each self-adjoint trace-class operator $K$, $W^*(A+K)$ fails to be a masa in $\mathcal{B}(\mathcal{H})$.
\end{remark}

\subsection{Cyclic vectors for properly infinite von Neumann algebras} \label{subsection-2.3}

Recall that a von Neumann algebra $\mathcal{M}$ is said to be \emph{properly infinite} if the identity operator $I$ is properly infinite in $\mathcal{M}$, which is equivalent to saying that each central projection in $\mathcal{M}$ is either infinite or zero.
The reader is referred to \cite[Definition 6.3.1]{Kadison2} for more details.
For two projections $P$ and $Q$ in $\mathcal{M}$, if there exists a partial isometry $V$ in $\mathcal{M}$ such that $V^*V=P$ and $VV^* = Q$, then $P$ and $Q$ are said to be {\em Murray-von Neumann equivalent} and we denote by $P\sim Q$ this equivalence relation.

\begin{lemma}\label{lem PI-VNA}
	Let $\mathcal{M}$ be a properly infinite von Neumann algebra.
	Then there is a system of matrix units $\{E_{jk}\}_{j,k=1}^{\infty}$ in $\mathcal{M}$ such that $\sum_{j=1}^{\infty}E_{jj}=I$.
\end{lemma}

\begin{proof}
	By \cite[Lemma 6.3.3]{Kadison2}, there are projections $P_1,Q_1$ in $\mathcal{M}$ such that $I=P_1+Q_1$ and $P_1\sim Q_1\sim I$.
	Similarly, there are projections $P_2,Q_2$ such that $Q_1=P_2+Q_2$ and $P_2\sim Q_2\sim I$.
	Inductively, we can define $P_n,Q_n$ such that $Q_{n-1}=P_n+Q_n$ and $P_n\sim Q_n\sim I$.
	Let
	\begin{equation*}
		E_1=P_1+(I-P_2-P_3-\cdots),\quad E_2=P_2,\quad E_3=P_3,\quad \ldots.
	\end{equation*}
	Then $I=\sum_{j=1}^{\infty}E_j$ and $E_j\sim I$ for each $j$.
	Let $E_{j1}$ be a partial isometry such that $E_{j1}^*E_{j1}=E_1$ and $E_{j1}E_{j1}^*=E_j$.
	We define $E_{ij}:=E_{i1}E_{1j}$.
	Then $\{E_{jk}\}_{j,k=1}^{\infty}$ is a system of matrix units in $\mathcal{M}$ such that $\sum_{j=1}^{\infty}E_{jj}=I$.
\end{proof}

It is worth mentioning that, in Exercise VIII.1 $(8)$ of \cite{Take2}, if the set of generating vectors for a von Neumann algebra $\mathcal{M}$ is non-empty, then it is a dense $G_\delta$-set in $\mathcal{H}$.
To perturb an operator to be irreducible in the trace-class norm, we develop the following characterization of properly infinite von Neumann algebras with respect to generating vectors.

\begin{lemma}\label{lem generating-PI-VNA}
    Let $\mathcal{M}$ be a properly infinite von Neumann algebra acting on $\mathcal{H}$.
    Then the set of generating vectors of $\mathcal{M}$ is dense in $\mathcal{H}$.
\end{lemma}

\begin{proof}
    By applying \Cref{lem PI-VNA}, there is a system of matrix units $\{E_{jk}\}_{j,k=1}^{\infty}$ in $\mathcal{M}$ such that $\sum_{j=1}^{\infty}E_{jj}=I$.
    Let $\mathcal{N}=E_{11}\mathcal{M}E_{11}\subseteq\mathcal{B}(E_{11}\mathcal{H})$. We define a unitary operator $U\colon\mathcal{H}\to\ell^2\otimes E_{11}\mathcal{H}$ by
    \begin{equation*}
      U\xi=\sum_{j=1}^{\infty}e_j\otimes E_{1j}\xi,
    \end{equation*}
    where $\{e_j\}_{j=1}^{\infty}$ is an orthonormal basis for $\ell^2$.
    We write $F_{jk}:= e_j\hat{\otimes}e_k$ for all $j,k\geqslant 1$.
    For any vectors $\xi$ and $\eta$ in $\mathcal{H}$, we have
    \begin{align*}
      \langle U^*(F_{jk}\otimes I_{\mathcal{N}})U\xi,\eta\rangle
      & =\langle(F_{jk}\otimes I_{\mathcal{N}})\sum_{\ell=1}^{\infty}e_\ell\otimes E_{1\ell}\xi,\sum_{\ell =1}^{\infty}e_\ell\otimes E_{1\ell}\eta\rangle \\
      & =\langle (F_{jk} \otimes I_{\mathcal{N}})(e_k\otimes E_{1k}\xi),e_j\otimes E_{1j}\eta\rangle \\
      & =\langle e_j\otimes E_{1k}\xi,e_j\otimes E_{1j}\eta\rangle
      =\langle E_{1k}\xi,E_{1j}\eta\rangle
      =\langle E_{jk}\xi,\eta\rangle.
    \end{align*}
    Then $\{F_{jk}\}_{j,k=1}^{\infty}$ is a system of matrix units in $\mathcal{B}(\ell^2)$ satisfying
    \begin{equation*}
      UE_{jk}U^*=F_{jk}\otimes I_{\mathcal{N}}\quad\text{for all}~j,k\geqslant 1.
    \end{equation*}
    It is routine to verify that  $U\mathcal{M}U^* = \mathcal{B}(\ell^2)\,\overline{\otimes}\,\mathcal{N}$.
    Without loss of generality, we assume that
    \begin{equation*}
      \mathcal{M}=\mathcal{B}(\ell^2)\,\overline{\otimes}\,\mathcal{N},\quad \mathcal{H}=\ell^2\otimes\mathcal{H}_0,\quad \mathcal{N}\subseteq\mathcal{B}(\mathcal{H}_0).
    \end{equation*}
    Let $\xi=\sum_{j=1}^{\infty} e_j\otimes\xi_j \in \ell^2\otimes\mathcal{H}_0$ and $\varepsilon>0$.
    Then there is a sufficiently large integer $n$ such that $\sum_{j=n+1}^{\infty}\|\xi_j\|^2 < \frac{\varepsilon^2}{4}$.
    Let $\{f_j\}_{j=1}^{\infty}$ be an orthonormal basis for $\mathcal{H}_0$ and construct a vector $\eta$ in the form
    \begin{equation*}
      \eta := \sum_{j=1}^{n}e_j\otimes\xi_j
           +  \sum_{k=1}^{\infty}\frac{\varepsilon}{2^{k+1}}e_{n+k}\otimes f_k.
    \end{equation*}
    Then $\|\xi-\eta\| < \varepsilon$.
    Moreover, for every $j,k\geqslant 1$, we have
    \begin{equation*}
      e_j\otimes f_k=\frac{2^{k+1}}{\varepsilon}(F_{j,n+k}\otimes I_{\mathcal{N}})\eta \in \mathcal{M}\eta.
    \end{equation*}
    Thus, $\eta$ is a generating vector for $\mathcal{M}$.    
\end{proof}

According to \Cref{lem generating-PI-VNA}, there exist numerous generating vectors for a properly infinite von Neumann algebra $\mathcal{M}$ acting on $\mathcal{H}$.
The following proposition is a related application about generating vectors.

\begin{proposition} 
    Let $\mathcal{M}$ be a von Neumann algebra acting on $\mathcal{H}$ with a generating vector $\xi$.
    Then
    \begin{equation*}
    	W^*(\mathcal{M},\xi\hat{\otimes}\xi)=\mathcal{B}(\mathcal{H}).
    \end{equation*}
\end{proposition}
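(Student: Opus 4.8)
The plan is to show that the von Neumann algebra generated by $\M$ together with the rank-one projection $\xi\hat{\otimes}\xi$ contains a rich enough supply of rank-one operators to force it to be all of $\B(\H)$. First I would observe that for any $S,T\in\M$ the operator
\[
S(\xi\hat{\otimes}\xi)T^* = (S\xi)\hat{\otimes}(T\xi)
\]
lies in $W^*(\M,\xi\hat{\otimes}\xi)$, since $S,T^*\in\M\subseteq W^*(\M,\xi\hat{\otimes}\xi)$ and $\xi\hat{\otimes}\xi$ is one of the generators. Because $\xi$ is generating for $\M$, the set $\{S\xi : S\in\M\}$ is dense in $\H$; hence for every pair of vectors $e,f\in\H$ the rank-one operator $e\hat{\otimes}f$ is a norm-limit of operators of the form $(S\xi)\hat{\otimes}(T\xi)$, using the norm estimate $\Vert e\hat{\otimes}f - e'\hat{\otimes}f'\Vert \le \Vert e-e'\Vert\,\Vert f\Vert + \Vert e'\Vert\,\Vert f-f'\Vert$ together with \eqref{exf-trace-class-norm} from \Cref{rem-norm-rank-one-opt}. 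Since a von Neumann algebra is norm-closed, every rank-one operator $e\hat{\otimes}f$ belongs to $W^*(\M,\xi\hat{\otimes}\xi)$.

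Next I would note that the linear span of the rank-one operators $\{e\hat{\otimes}f : e,f\in\H\}$ is exactly the set of finite-rank operators, which is strong-operator (indeed weak-operator) dense in $\B(\H)$; for instance, fixing an orthonormal basis $\{e_j\}$, the finite partial sums $\sum_{j\le n} e_j\hat{\otimes}e_j$ converge strongly to $I$, and more generally the finite-rank truncations of any $X\in\B(\H)$ converge strongly to $X$. Therefore the von Neumann algebra generated by the finite-rank operators is all of $\B(\H)$. Combining the two observations, $W^*(\M,\xi\hat{\otimes}\xi)$ contains all finite-rank operators and is weak-operator closed, hence equals $\B(\H)$. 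This completes the proof.

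The argument is essentially routine; the only point requiring a little care is the passage from "$\{S\xi\}$ dense" to "all rank-one operators are in the algebra," where one must be careful that $W^*(\M,\xi\hat{\otimes}\xi)$ is norm-closed (a standard property of von Neumann algebras) so that the approximation by $(S\xi)\hat{\otimes}(T\xi)$ actually lands inside it. No deeper obstacle is expected, since properness or infiniteness of $\M$ plays no role here — the statement is purely about a von Neumann algebra with a generating vector, and \Cref{lem-generating-vectors-density} is what will later supply such vectors in the properly infinite case where this proposition is applied.
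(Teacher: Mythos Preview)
Your proposal is correct and follows essentially the same route as the paper: both hinge on the identity $S(\xi\hat{\otimes}\xi)T^* = (S\xi)\hat{\otimes}(T\xi)$ together with the density of $\M\xi$ in $\H$. The only cosmetic difference is that you pass through norm-closure to first obtain every rank-one operator and then invoke weak-operator density of finite-rank operators, whereas the paper argues directly that the weak-operator closure of $\operatorname{span}\{(T_1\xi)\hat{\otimes}(T_2\xi)\}$ is $\B(\H)$; the substance is the same.
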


\begin{proof}
    Note that for every $T_1$ and $T_2$ in $\mathcal{M}$, we have
    \begin{equation*}
      T_1\xi\hat{\otimes}T_2\xi=T_1(\xi\hat{\otimes}\xi)T_2^*\in W^*(\mathcal{M},\xi\hat{\otimes}\xi).
    \end{equation*}
    Since $\xi$ is a generating vector for $\mathcal{M}$, the set $\mathcal{M}\xi$ is dense in $\mathcal{H}$.
    It follows that the weak-operator closure of $\mathrm{span}\{T_1\xi\hat{\otimes}T_2\xi\colon T_1,T_2\in\mathcal{M}\}$ equals $\mathcal{B}(\mathcal{H})$.
    This completes the proof.
\end{proof}

By applying \Cref{lemma-diagonal}, we prove the following result, which plays an essential role in the proof of \Cref{thm main}.

\begin{proposition}\label{lemma pre-main}
    Suppose that $A$ and $B$ are self-adjoint operators in $\mathcal{B}(\mathcal{H})$  of the form
    \begin{equation*}
        A:=
        \begin{pmatrix}
        A_{11} & 0 \\
        0 & A_{22}
        \end{pmatrix}
        \quad\text{and}\quad
        B:=
        \begin{pmatrix}
        B_{11} & B_{12} \\
        B^*_{12} & B_{22}
        \end{pmatrix}
        \begin{array}{l}
        \mathcal{H}_1\\
        \mathcal{H}_2
        \end{array},
    \end{equation*}
    where $\mathcal{H}=\mathcal{H}_1\oplus\mathcal{H}_2$, $\mathcal{H}_1\ne\{0\}$, and
    \begin{enumerate}
        \item[$(1)$] $A_{11}$ is a diagonal operator on $\mathcal{H}_1$,
        
        \item[$(2)$] the set of generating vectors for $W^*(A_{22}+i B_{22})$ is dense in $\mathcal{H}_2$.
    \end{enumerate}
    Then for every $\varepsilon>0$, there exists a trace-class operator $K$ in $\mathcal{B}(\mathcal{H})$ with $\|K\|_1<\varepsilon$ such that the operator $(A+iB)+K$ is irreducible in $\mathcal{B}(\mathcal{H})$.
\end{proposition}

\begin{proof}
    Since $A_{11}$ is diagonal, by \Cref{lemma-diagonal}, there exists a self-adjoint trace-class operator $K_1$ in $\mathcal{B}(\mathcal{H}_1)$ with $\| K_1 \|_1 < \frac{\varepsilon}{4}$ such that $A_{11}+K_1$ is a diagonal operator with distinct eigenvalues and $\sigma_p(A_{11}+K_1) \cap \sigma_p(A_{22})=\varnothing$.
    Similar to the proof of \Cref{lem diagonalizable}, there is a self-adjoint operator $K_2$ in $\mathcal{B}(\mathcal{H}_1)$ with $\|K_2\|_1 < \frac{\varepsilon}{4}$ such that $(A_{11}+K_1)+i(B_{11}+K_2)$ is irreducible in $\mathcal{B}(\mathcal{H}_1)$.

    Given a unit vector $\eta$ in $\H_1$, by the hypothesis that the set of generating vectors for $W^*(A_{22}+i B_{22})$ is dense in $\mathcal{H}_2$, there is a vector $\xi$ in $\mathcal{H}_2$ with $\|\xi\|<\frac{\varepsilon}{4}$ such that $B_{21}\eta+\xi$ is a generating vector for $W^*(A_{22}+iB_{22})$.
    Thus, $B_{21}\eta+\xi$ is a separating vector for $W^*(A_{22}+iB_{22})'$.
    Let
    \begin{equation*}
      A_1=
      \begin{pmatrix}
        A_{11}+K_1 & 0 \\
        0 & A_2
      \end{pmatrix}\quad\text{and}\quad
      B_1=
      \begin{pmatrix}
        B_{11}+K_2 & B_{12}+\eta\hat{\otimes}\xi  \\
        B_{21}+\xi\hat{\otimes}\eta & B_{22}
      \end{pmatrix}.
    \end{equation*}
    Then $\|(A_1+iB_1)-(A+iB)\|_1<\varepsilon$.
    It suffices to prove that $A_1+iB_1$ is irreducible in $\mathcal{B}(\mathcal{H})$.

    Since $A_{11}+K_1$ is diagonal and $\sigma_p(A_{11}+K_1)\cap\sigma_p(A_{22})=\varnothing$, we have that $I_1\oplus 0\in W^*(A_1+iB_1)$.
    It follows that
    \begin{equation*}
      (A_{11}+K_1)\oplus 0,(B_{11}+K_2)\oplus 0\in W^*(A_1+iB_1).
    \end{equation*}
    Thus, $\mathcal{B}(\mathcal{H}_1)\oplus 0\subseteq W^*(A_1+iB_1)$.

    Let $Q$ be a projection commuting with $A_1+iB_1$.
    Then $Q$ can be written as $Q_1\oplus Q_2$, and we have either $Q_1=0$ or $Q_1=I_1$.
    Without loss of generality, we assume that $Q_1=0$.
    Since $QB_1=B_1Q$, we obtain that $Q_2(B_{21}+\xi\hat{\otimes}\eta)=0$.
    It follows that
    \begin{equation*}
      Q_2(B_{21}\eta+\xi)=Q_2(B_{21}+\xi\hat{\otimes}\eta)\eta=0.
    \end{equation*}
    Note that $Q_2\in W^*(A_2+iB_{22})'$ and $B_{21}\eta+\xi$ is a separating vector for $W^*(A_2+iB_{22})'$.
    Therefore, we have $Q_2=0$ and $Q=0$.
    This completes the proof.
\end{proof}

We present a remark related to finite von Neumann algebras.

\begin{remark}\label{rem direct-sum-finite-VNA}
    Let $\{T_{\lambda}\}_{\lambda\in\Lambda}$ be a family of operators such that each $W^*(T_{\lambda})$ is a finite von Neumann algebra acting on $\mathcal{H}_{\lambda}$.
    Clearly, $\bigoplus_{\lambda\in\Lambda} W^*(T_{\lambda})$ is a finite von Neumann algebra by applying Lemma $6.3.6$ of \cite{Kadison2}.
    Note that $W^*(\bigoplus_{\lambda\in\Lambda} T_{\lambda})$ is a von Neumann subalgebra of $\bigoplus_{\lambda\in\Lambda} W^*(T_{\lambda})$.
    Employing Proposition $6.3.2$ of \cite{Kadison2}, $W^*(\bigoplus_{\lambda\in\Lambda}T_{\lambda})$ is a finite von Neumann algebra acting on $\bigoplus_{\lambda\in\Lambda}\mathcal{H}_{\lambda}$.
\end{remark}


In view of \Cref{rem direct-sum-finite-VNA}, we prove an analogous result for operators $\{T_{\lambda}\}_{\lambda\in\Lambda}$, where each $W^*(T_{\lambda})$ is a properly infinite von Neumann algebra.

\begin{lemma}\label{lem direct-sum-PI-VNA}
    Let $\{T_{\lambda}\}_{\lambda\in\Lambda}$ be a family of operators such that each $W^*(T_{\lambda})$ is a properly infinite von Neumann algebra acting on $\mathcal{H}_{\lambda}$. Then $W^*(\bigoplus_{\lambda\in\Lambda} T_{\lambda})$ is a properly infinite von Neumann algebra acting on $\bigoplus_{\lambda\in\Lambda} \mathcal{H}_{\lambda}$.
\end{lemma}

\begin{proof}
    Without loss of generality, each $\mathcal{H}_{\lambda}$ is  viewed as a subspace of $\mathcal{H}=\bigoplus_{\lambda\in\Lambda}\mathcal{H}_{\lambda}$.
    Let $E_{\lambda}'$ be the projection from $\mathcal{H}$ onto $\mathcal{H}_{\lambda}$ for all $\lambda\in\Lambda$.
    
    Write $T:=\bigoplus_{\lambda\in\Lambda} T_{\lambda}$.
    It is clear that $\{E_{\lambda}'\}_{\lambda\in\Lambda}$ is a family of projections in $W^*(T)'$ with sum $I$.
    Let $P$ be a finite central projection in $W^*(T)$.
    We prove that $P=0$ as follows.
    By applying Proposition 6.3.2 of \cite{Kadison2}, we obtain that $PC_{E_{\lambda}'}$ is a finite central projection in $W^*(TC_{E_{\lambda}'})$, where $C_{E_{\lambda}'}$ is the central support of $E_{\lambda}'$ in $W^*(T)^{\prime}$ for each $\lambda \in \Lambda$.
    By Proposition 5.5.5 of \cite{Kadison1}, $PE_{\lambda}'$ is a finite central projection in $W^*(TE_{\lambda}')=W^*(T_{\lambda})$.
    Since each $W^*(T_{\lambda})$ is properly infinite, we see that $PE_{\lambda}'=0$ for every $\lambda \in \Lambda$.
    It follows that $P=0$.
    This completes the proof.
\end{proof}


    

\section{Perturbation of single generators of finite von Neumann algebras} \label{section finite-VNA}

Let $T$ be an operator in $\mathcal{B}(\mathcal{H})$. Here are two brief applications we obtain in this section.
If $W^*(T)$ is a finite type $\mathrm{I}$ von Neumann algebra, then $T \in \overline{\mathrm{IR}({\mathcal{H}})}^{\Vert\cdot\Vert_1}$ 
(see \Cref{prop-type-I}).
If $W^*(T)$ is a type $\mathrm{II}_1$ von Neumann algebra with nontrivial center, then the same conclusion holds for $T$ (see \Cref{prop type-II}).

\subsection{Finite von Neumann algebras} \label{subsection finite}

Recall that the pair $(\mathcal{M},\tau)$ is called a {\em tracial von Neumann algebra} if $\mathcal{M}$ is a finite von Neumann algebra and $\tau$ is a normal faithful tracial state on $\mathcal{M}$.
By the GNS construction, the normal faithful tracial state $\tau$ induces a normal *-isomorphism $\pi_\tau$ from $\mathcal{M}$ onto the von Neumann algebra $\pi_{\tau}(\mathcal{M})$ acting on $L^2(\mathcal{M},\tau)$.
Since $\tau$ is faithful, every operator $X$ in $\mathcal{M}$ can be viewed as a vector $\widehat{X}$ in $L^2(\mathcal{M},\tau)$ and the inner product on $\widehat{\mathcal{M}}$ (as a dense subset of $L^2(\mathcal{M},\tau)$) is defined by
\begin{equation*}
	\langle\widehat{X},\widehat{Y}\rangle=\tau(Y^*X)\quad\text{for all}~X,Y\in\mathcal{M}.
\end{equation*}
In particular, we write $\hat{1}$ as the vector in $L^2(\mathcal{M},\tau)$ corresponding to the identity operator $I$ in $\mathcal{M}$.
Thus, for every $X\in\mathcal{M}$, we also write the vector $\widehat{X}$ as $X\hat{1}$.
By definition, we have
\begin{equation*}
	\pi_\tau(T)\widehat{X} = \widehat{TX}\quad\text{and}\quad
	\tau(T) = \langle\pi_\tau(T)\hat{1},\hat{1}\rangle\quad\text{for all}~ T,X\in\mathcal{M}.
\end{equation*}
Moreover, $\pi_\tau$ is called the {\em standard representation} of $(\mathcal{M},\tau)$.
When no confusion can arise, we will omit $\pi_\tau$ for each operator $T$ in $\mathcal{M}$ acting on $L^2(\mathcal{M},\tau)$.

Recall that a unitary operator $U$ in a tracial von Neumann algebra $(\mathcal{M},\tau)$ is said to be a {\em Haar unitary operator} if $\tau(U^n)=0$ for all $n\in\mathbb{Z}\backslash\{0\}$.

\begin{lemma}\label{lem Haar-power}
	Let $\mathcal{A}$ be a diffuse abelian von Neumann algebra acting on $\mathcal{H}$.
	Then there exists a sequence $\{U_n\}_{n=1}^\infty$ of unitary operators in $\mathcal{A}$ weak-operator convergent to $0$.
\end{lemma}

\begin{proof}
	Let $\tau$ be a normal faithful tracial state on $\mathcal{A}$.
	By \cite[Theorem 3.5.2]{Sinclair&Smith}, there is a *-isomorphism $\theta$ from $\mathcal{A}$ onto $L^\infty[0,1]$ such that
	\begin{equation*}
		\tau(A) = \int_{0}^{1}\theta(A)(t)dt\quad\text{for all}~A\in\mathcal{A}.
	\end{equation*}
	Let $u(t)=e^{2\pi it}$ and $U=\theta^{-1}(u)$.
	Since $u$ is a Haar unitary operator in $L^\infty[0,1]$, $U$ is a Haar unitary operator in $(\mathcal{A},\tau)$.
	We claim that the sequence $\{U^n\}_{n=1}^{\infty}$ is weak-operator convergent to $0$.

	According to the GNS construction, the normal faithful tracial state $\tau$ induces a normal *-isomorphism $\pi_{\tau}$ from $\mathcal{A}$ onto the von Neumann algebra $\pi_{\tau}(\mathcal{A})$ acting on $L^2(\mathcal{A},\tau)$.
	By applying \cite[Corollary 7.1.16]{Kadison2}, it suffices to prove that $\{\pi_{\tau}(U^n)\}_{n=1}^\infty$ is weak-operator convergent to $0$.
	Since $U$ is a Haar unitary operator in $\mathcal{A}$, the sequence $\{\widehat{U^n}\}_{n=1}^\infty$ is an orthonormal subset of $L^2(\mathcal{A},\tau)$.
	Thus, for any vectors $\widehat{X}$ and $\widehat{Y}$ in $\widehat{\mathcal{A}}$ (as a dense subset of $L^2(\mathcal{A},\tau)$), we obtain that
	\begin{equation*}
		\langle\pi_\tau(U^n)\widehat{X},\widehat{Y}\rangle
  		=\tau(U^nXY^*)=\langle\widehat{U^n},\widehat{YX^*}\rangle\to 0\quad\text{as}~n\to\infty.
	\end{equation*}
	Thus, $\{\pi_{\tau}(U^n)\}_{n=1}^\infty$ is weak-operator convergent to $0$.
	This completes the proof.
\end{proof}

The following lemma is well known to experts.

\begin{lemma}\label{lem KUn}
    Suppose that $\{U_n\}_{n=1}^\infty$ is a sequence of unitary operators weak-operator convergent to $0$ in $\mathcal{B}(\mathcal{H})$.
    Then for every compact operator $K$ in $\mathcal{B}(\mathcal{H})$, the sequence $\{KU_n\}_{n=1}^\infty$ is strong-operator convergent to $0$.
\end{lemma}

\begin{proof}
    For any vector $\xi\in\mathcal{H}$, the sequence $\{U_n\xi\}^{\infty}_{n = 1}$ is weakly convergent to $0$ in $\mathcal{H}$.
    Since $K$ is compact, we have $\|KU_n\xi\|\to 0$ as $n\to\infty$.
    This completes the proof.
\end{proof}

Recall that a set $\mathcal{D}$ is called a {\em total subset} of $\mathcal{H}$ if the closed linear span of $\mathcal{D}$ equals $\mathcal{H}$.
The following lemma provides a criterion for determining whether a self-adjoint operator in $\mathcal{B}(\mathcal{H})$ belongs to $\mathcal{M}$.

\begin{lemma}\label{lem A-in-M}
    Let $\mathcal{M}$ be a von Neumann algebra acting on $\mathcal{H}$ and $\mathcal{D}$ a total subset of $\mathcal{H}$. Suppose that $A$ in $\mathcal{B}(\mathcal{H})$ is self-adjoint for which, given any $\xi\in\mathcal{D}$ and $\varepsilon>0$, there is a self-adjoint operator $B$ in $\mathcal{M}$ such that $\|(A-B)\xi\|<\varepsilon$.
    Then $A\in\mathcal{M}$.
\end{lemma}

\begin{proof}
    Let $E'$ be a projection in $\mathcal{M}'$, $\xi$ a vector in $\mathcal{D}$, and $\varepsilon>0$.
    By assumption, there is a self-adjoint operator $B$ in $\mathcal{M}$ such that the inequality $\|(A-B)\xi\|\cdot\|\xi\|<\frac{\varepsilon}{2}$ holds.
    It follows that
    \begin{align*}
        |\langle(AE'-E'A)\xi,\xi\rangle|
        &=|\langle((A-B)E'-E'(A-B))\xi,\xi\rangle| \\
        &\leqslant|\langle E'\xi,(A-B)\xi\rangle|+|\langle(A-B)\xi,E'\xi\rangle|\\
        &\leqslant 2\|(A-B)\xi\|\cdot\|\xi\|<\varepsilon.
    \end{align*}
    Thus, $\langle(AE'-E'A)\xi,\xi\rangle=0$ for each vector $\xi\in\mathcal{D}$.
    By the polarization identity and continuity of the inner product, we obtain  $AE'=E'A$ for every projection $E'\in\mathcal{M}'$.
    Therefore, $A\in\mathcal{M}$. This completes the proof.
\end{proof}

The following corollary is immediate.

\begin{corollary}\label{cor A-in-M}
    Let $\mathcal{M}$ be a von Neumann algebra acting on $\mathcal{H}$ and $\mathcal{D}$ a total subset of $\mathcal{H}$. Suppose that $T$ is in $\mathcal{B}(\mathcal{H})$ for which, given any $\xi\in\mathcal{D}$ and $\varepsilon>0$, there is an operator $S$ in $\mathcal{M}$ such that $\|(T-S)\xi\|<\varepsilon$ and $\|(T-S)^*\xi\|<\varepsilon$.
	Then $T\in\mathcal{M}$.
\end{corollary}

If $(\mathcal{M},\tau)$ is a tracial von Neumann algebra, then we define
\begin{equation*}
  \|X\|_{2,\tau}=\tau(X^*X)^{1/2}\quad\text{for all}~X\in\mathcal{M}.
\end{equation*}
For an abelian von Neumann algebra $\mathcal{A}$, its {\em atomic support} is defined as the sum of all minimal projections in $\mathcal{A}$.
If $A$ is a self-adjoint operator and $P$ is the atomic support of $W^*(A)$, then clearly $AP=PA$ is a diagonal operator.
The next lemma is related to \Cref{lem 2-projections}.

\begin{lemma}\label{lem relative-commutant}
    Let  $A$ and $B$ be self-adjoint operators in $\mathcal{B}(\mathcal{H})$. Suppose that $\mathcal{M} = W^*(A+iB)$ is a finite von Neumann algebra and $P$ is the atomic support of $W^*(A)$. Then for every self-adjoint compact operator $K$ on $\mathcal{H}$, we have
    \begin{equation*}
    	W^*(A)'\cap(I-P)\mathcal{M}(I-P)\subseteq W^*(A,B+K).
    \end{equation*}
\end{lemma}

\begin{proof}
	Since $\mathcal{M}$ is a finite von Neumann algebra, there is a normal faithful tracial state $\tau$ on $\mathcal{M}$.
	Without loss of generality, we assume that
	\begin{equation*}
		\mathcal{H}=\bigoplus_{j=1}^\infty L^2(\mathcal{M}P_j,\tau),
	\end{equation*}
	where $\{P_j\}_{j=1}^\infty$ is a sequence of projections in $\mathcal{M}$ and some $P_j$'s may be $0$.
	For each $j\geqslant 1$, let $\mathcal{D}_j$ be the linear manifold of all vectors $\xi=\{\xi_k\}_{k=1}^\infty$ satisfying that $\xi_j\in\mathcal{M}\widehat{P_j}$ and $\xi_k=0$ for each $k\neq j$.
	Write $\mathcal{D}=\bigcup_{j=1}^\infty\mathcal{D}_j$.
	Then $\mathcal{D}$ is a total subset of $\mathcal{H}$.
	Let $\xi$ be a unit vector in $\mathcal{D}$ and $\varepsilon>0$.
	We may assume that $\xi=X\widehat{P_j}\oplus 0\in\mathcal{D}_j$ for some nonzero $X\in\mathcal{M}$ and $j\geqslant 1$.

	Let $T$ be an operator in $W^*(A)'\cap(I-P)\mathcal{M}(I-P)$.
	By the Kaplansky density theorem (Theorem 5.3.5 of \cite{Kadison1}), there is a non-commutative polynomial $p$ such that
	\begin{equation}\label{equ 2-tau}
		\|T-p(A,B)\|_{2,\tau}=\|(T-p(A,B))^*\|_{2,\tau}<\frac{\varepsilon}{2\|X\|}.
	\end{equation}
	Since $K$ is compact, we can write $p(A,B+K)=p(A,B)+K_0$, where $K_0$ is compact.

	Since $W^*(A)(I-P)$ is a diffuse abelian von Neumann algebra acting on $(I-P)\mathcal{H}$, by \Cref{lem Haar-power}, there is a sequence of partial isometries $\{U_n\}^{\infty}_{n=1}$ in $W^*(A)$ weak-operator convergent to $0$ and $U_n^*U_n=U_nU_n^*=I-P$.
	By \Cref{lem KUn}, there exists an integer $N\geqslant 1$ such that for all $n\geqslant N$, we have
	\begin{equation}\label{equ KUn}
		\|K_0U_n\xi\|<\frac{\varepsilon}{2}\quad\text{and}\quad \|K_0^*U_n\xi\|<\frac{\varepsilon}{2}.
	\end{equation}
	Since $TU_n=U_nT$, it follows from \eqref{equ 2-tau} and \eqref{equ KUn} that
	\begin{align*}
  		\|(T-U_n^*p(A,B+K)U_n)\xi\|
  		&\leqslant\|(T-U_n^*p(A,B)U_n)\xi\|+\|U_n^*K_0U_n\xi\|\\
  		&=\|U_n^*(T-p(A,B))U_nXP_j\|_{2,\tau}+\|K_0U_n\xi\|\\
  		&\leqslant\|T-p(A,B)\|_{2,\tau}\|X\|+\frac{\varepsilon}{2}<\varepsilon.
	\end{align*}
	Similarly, we have $\|(T-U_n^*p(A,B+K)U_n)^*\xi\|<\varepsilon$.
	Therefore, by \Cref{cor A-in-M}, we can obtain that $T\in W^*(A,B+K)$.
	This completes the proof.
\end{proof}

\subsection{\texorpdfstring{$\Vert \cdot \Vert_1$}{trace-class}-norm perturbation of operators in finite von Neumann algebras} \label{subsection II1-vNalg}

In this subsection, we show that a large class of operators can be perturbed into irreducible operators.
Recall that the atomic support of an abelian von Neumann algebra $\mathcal{A}$ is the sum of all minimal projections in $\mathcal{A}$.

\begin{lemma}\label{lem CP=I}
    Let $A$ and $B$ be self-adjoint operators in $\mathcal{B}(\mathcal{H})$. 
    Suppose that $W^*(A)Z$ is not diffuse for every nonzero central projection $Z$ in $W^*(A+iB)$. Then for every $\varepsilon>0$, there is a trace-class operator $K\in\mathcal{B}(\mathcal{H})$ with $\|K\|_1<\varepsilon$ such that $(A+iB)+K$ is irreducible in $\mathcal{B}(\mathcal{H})$.
\end{lemma}

\begin{proof}
    Let $P$ be the atomic support of $W^*(A)$.
    Then for every central projection $Z$ in $W^*(A+iB)$ with $ZP=0$, $W^*(A)Z$ is diffuse or $Z=0$.
    By assumption, we have $Z=0$.
    Therefore, $C_P=I$, where $C_P$ is the central support of $P$ in $W^*(A+iB)$.
    Let $\mathcal{H}_1=P\mathcal{H}$ and $\mathcal{H}_2=(I-P)\mathcal{H}$.
    Then we can write $\mathcal{H}=\mathcal{H}_1\oplus\mathcal{H}_2$ and
    \begin{equation*}
      A=
      \begin{pmatrix}
        A_{11} & 0 \\
        0 & A_{22}
      \end{pmatrix}\quad\text{and}\quad
      B=
      \begin{pmatrix}
        B_{11} & B_{12} \\
        B_{21} & B_{22}
      \end{pmatrix}
      \begin{array}{l}
        \mathcal{H}_1\\
        \mathcal{H}_2
      \end{array}.
    \end{equation*}
    Since $P$ is the atomic support of $W^*(A)$, $A_{11}$ is a diagonal self-adjoint operator on $\mathcal{H}_1$.
    According to \Cref{lemma-diagonal}, there exists a self-adjoint trace-class operator $K_1$ in $\mathcal{B}(\mathcal{H}_1)$ with $\|K_1\|_1 < \frac{\varepsilon}{2}$ such that $A_{11}+K_1$ is a diagonal operator with distinct eigenvalues and $\sigma_p(A_{11}+K_1)\cap\sigma_p(A_{22})=\varnothing$.
    By applying \Cref{lem diagonalizable}, there exists a self-adjoint operator $K_2$ in $\mathcal{B}(\mathcal{H}_1)$ with $\|K_2\|_1 < \frac{\varepsilon}{2}$ such that $(A_{11}+K_1)+i(B_{11}+K_2)$ is irreducible in $\mathcal{B}(\mathcal{H}_1)$.
    Let $A_1$ and $B_1$ be self-adjoint operators of the form
    \begin{equation*}
      A_1:=
      \begin{pmatrix}
        A_{11}+K_1 & 0 \\
        0 & A_{22}
      \end{pmatrix}\quad\text{and}\quad
      B_1:=
      \begin{pmatrix}
        B_{11}+K_2 & B_{12} \\
        B_{21} & B_{22}
      \end{pmatrix}.
    \end{equation*}
    It suffices to prove that $A_1+iB_1$ is irreducible in $\mathcal{B}(\mathcal{H})$.
    
    Since $A_{11}+K_1$ is diagonal and $\sigma_p(A_{11}+K_1)\cap\sigma_p(A_{22})=\varnothing$, we can obtain that $P\in W^*(A_1+iB_1)$.
    It follows that
    \begin{equation*}
      (A_{11}+K_1)\oplus 0,(B_{11}+K_2)\oplus 0\in W^*(A_1+iB_1).
    \end{equation*}
    Thus, $\mathcal{B}(\mathcal{H}_1)\oplus 0\subseteq W^*(A_1+iB_1)$.
    In particular, $A,B\in W^*(A_1+iB_1)$.

    Let $Q$ be a projection commuting with $A_1+iB_1$.
    Since $QP=PQ$, $Q$ can be written as $Q = Q_1\oplus Q_2$, where $Q_j\in\mathcal{B}(\mathcal{H}_j)$ for $j=1,2$.
    Then either $Q_1=0$ or $Q_1=P$.
    Without loss of generality, we can assume that $Q_1=0$. From $Q$ commuting with $A_1+iB_1$ and $\{A,B\}\subseteq W^*(A_1+iB_1)$,
    we have $QXP=XPQ=0$ for every $X\in W^*(A+iB)$.
    It follows that $QC_P=0$, i.e., $Q=0$.
    This completes the proof.
\end{proof}

As a direct application of \Cref{lem CP=I}, we obtain the following corollary.

\begin{corollary}\label{cor factor CP=I}
    Let $A$ and $B$ be self-adjoint operators in $\mathcal{B}(\mathcal{H})$ such that $W^*(A+iB)$ is a factor.
    If $W^*(A)$ is not diffuse, then for every $\varepsilon>0$, there is a trace-class operator $K\in\mathcal{B}(\mathcal{H})$ with $\|K\|_1<\varepsilon$ such that $(A+iB)+K$ is irreducible in $\mathcal{B}(\mathcal{H})$.
\end{corollary}

Compared to \Cref{lem CP=I}, we have the following result. In the remaining part of this subsection, for self-adjoint operators  $A$ and $B$ in $\mathcal{B}(\mathcal{H})$, we assume that the von Neumann algebra $W^*(A+iB)$ is finite.

\begin{lemma}\label{lem CP<I}
    Let $A$ and $B$ be self-adjoint operators in $\mathcal{B}(\mathcal{H})$ such that $W^*(A+iB)$ is a finite von Neumann algebra.
    Suppose that $Z$ is a central projection in $W^*(A+iB)$ such that $W^*(A)Z$ is diffuse and $I-Z$ is infinite dimensional.
    Then for every $\varepsilon>0$, there is a trace-class operator $K\in\mathcal{B}(\mathcal{H})$ with $\|K\|_1<\varepsilon$ such that $(A+iB)+K$ is irreducible in $\mathcal{B}(\mathcal{H})$.
\end{lemma}

\begin{proof}
    Since $W^*(A)Z$ is diffuse, $Z$ is infinite dimensional.
    Let $P$ be the atomic support of $W^*(A)$.
    Then $Z\leqslant I-P$.
    It follows from \Cref{lem relative-commutant} that $Z\in W^*(A,B+K)$ for every self-adjoint compact operator $K$ on $\mathcal{H}$.
    The remaining part of the proof is completed by \Cref{lem 2-projections}.
\end{proof}

By \Cref{lem CP=I} and \Cref{lem CP<I}, we obtain the following proposition.

\begin{proposition}\label{cor CP}
    Let $A$ and $B$ be self-adjoint operators in $\mathcal{B}(\mathcal{H})$ such that $W^*(A+iB)$ is a diffuse finite von Neumann algebra and is not a factor.
    Then for every $\varepsilon>0$, there is a trace-class operator $K\in\mathcal{B}(\mathcal{H})$ with $\|K\|_1<\varepsilon$ such that $(A+iB)+K$ is irreducible in $\mathcal{B}(\mathcal{H})$.
\end{proposition}

\begin{proof}
    If $W^*(A)Z$ is not diffuse for every nonzero central projection $Z$ in $W^*(A+iB)$, then we complete the proof by \Cref{lem CP=I}.
    Thus, we assume that there exists a central projection $Z_0$ in $W^*(A+iB)$ such that $W^*(A)Z_0$ is diffuse.
    
    Note that $W^*(A+iB)$ is not a factor by assumption.
    If $Z_0=I$, then there exists a central projection $Z$ in $W^*(A+iB)$ with $0<Z<I$.
    Clearly, $W^*(A)Z$ is diffuse.
    If $0<Z_0<I$, then we choose $Z=Z_0$.
    Since $W^*(A+iB)$ is diffuse, $I-Z$ is infinite dimensional.
    Therefore, we complete the proof by \Cref{lem CP<I}.
\end{proof}

The following proposition is a special case of \Cref{cor CP}.

\begin{proposition}\label{prop type-II}
    Let $A$ and $B$ be self-adjoint operators in $\mathcal{B}(\mathcal{H})$. Suppose that $W^*(A+iB)$ is a type $\mathrm{II}_1$ von Neumann algebra and is not a factor.
    Then for every $\varepsilon>0$, there is a trace-class operator $K\in\mathcal{B}(\mathcal{H})$ with $\|K\|_1<\varepsilon$ such that $(A+iB)+K$ is irreducible in $\mathcal{B}(\mathcal{H})$.
\end{proposition}

Let $P$ be the atomic support of $W^*(A)$.
The following lemma shows that, in a finite von Neumann algebra, the condition $C_P=I$ in \Cref{lem CP=I} can be replaced by the condition $P$ being infinite dimensional.

\begin{lemma}\label{lem P-infinite}
    Let $A$ and $B$ be self-adjoint operators in $\mathcal{B}(\mathcal{H})$
    such that $W^*(A+iB)$ is a finite von Neumann algebra and the atomic support of $W^*(A)$ is an infinite-dimensional projection.
    Then for every $\varepsilon>0$, there exists a trace-class operator $K\in\mathcal{B}(\mathcal{H})$ with $\|K\|_1<\varepsilon$ such that $(A+iB)+K$ is irreducible in $\mathcal{B}(\mathcal{H})$.
\end{lemma}

\begin{proof}
    Let $P$ be the atomic support of $W^*(A)$ and $C_P$ the central projection of $P$ in $W^*(A+iB)$.
    If $C_P=I$, then we complete the proof by \Cref{lem CP=I}.
    Assume that $0<C_P<I$.
    Since $P$ is infinite dimensional, $C_P$ is also infinite dimensional.
    Moreover, $W^*(A)(I-C_P)$ is diffuse.
    Therefore, we complete the proof by \Cref{lem CP<I}.
\end{proof}

The following corollary is a direct application of \Cref{lem P-infinite}.

\begin{corollary}\label{cor P-infinite}
    Let $A$ and $B$ be self-adjoint operators in $\mathcal{B}(\mathcal{H})$ such that $W^*(A+iB)$ is a diffuse finite von Neumann algebra and $W^*(A)$ is not diffuse.
    Then for every $\varepsilon>0$, there is a trace-class operator $K\in\mathcal{B}(\mathcal{H})$ with $\|K\|_1<\varepsilon$ such that $(A+iB)+K$ is irreducible in $\mathcal{B}(\mathcal{H})$.
\end{corollary}

The following proposition is an enhanced version of \Cref{cor normal}.

\begin{proposition} \label{prop-type-I}
    Let $A$ and $B$ be self-adjoint operators in $\mathcal{B}(\mathcal{H})$. Suppose that $W^*(A+iB)$ is a finite type $\mathrm{I}$ von Neumann algebra.
    Then for every $\varepsilon>0$, there is a trace-class operator $K\in\mathcal{B}(\mathcal{H})$ with $\|K\|_1<\varepsilon$ such that $(A+iB)+K$ is irreducible in $\mathcal{B}(\mathcal{H})$.
\end{proposition}

\begin{proof}
    Following the proof of \Cref{cor CP}, we may assume that there exists a central projection $Z_0$ in $W^*(A+iB)$ such that $W^*(A)Z_0$ is diffuse.
    Since $W^*(A+iB)Z_0$ is also a finite type $\mathrm{I}$ von Neumann algebra, there is a central projection in $W^*(A+iB)$ such that $0<Z<Z_0$.
    Clearly, $W^*(A)Z$ and $W^*(A)(Z_0-Z)$ are diffuse.
    In particular, $I-Z$ is infinite dimensional.
    The proof is completed by \Cref{lem CP<I}.
\end{proof}

\begin{remark}
    Among examples of single generators $A+iB$ of type $\mathrm{II}_1$ factors, for each separable type $\mathrm{II}_1$ factor $\M$ with $\G(\M)=0$, by Theorem $16.4.5$ of \cite{Sinclair&Smith}, it is not hard to construct two self-adjoint operators $A$ and $B$ with $W^*(A+iB)=\M$ and $W^*(A)$ not diffuse. It is also worth mentioning that for each type $\mathrm{II}_1$  factor $\mathcal{N}$ associated with groups $SL_n(\mathbb{Z})$ for $n\geqslant 3$, by Theorem $5$ of \cite{Ge1}, there exists two self-adjoint operators $A$ and $B$ with $W^*(A+iB)=\N$ and $W^*(A)$ not diffuse. Note that these  type $\mathrm{II}_1$  factors $\mathcal{N}$ have property $(T)$. An interesting question is whether $\mathcal{L}(\mathbf{F}_2)$ has a single generator $A+iB$ with either $W^*(A)$ or $W^*(B)$ not diffuse.
\end{remark}

\subsection{\texorpdfstring{$\Vert \cdot \Vert_1$}{trace-class}-norm perturbation of operators in type \texorpdfstring{$\mathrm{II}_1$}{II1} factors} \label{subsection II1-factor}

Let $A$ and $B$ be self-adjoint operators in $\mathcal{B}(\mathcal{H})$ such that $W^*(A+iB)$ is a finite von Neumann algebra.
By \Cref{prop type-II} and \Cref{prop-type-I}, we only need to consider the case $W^*(A+iB)$ being a type $\mathrm{II}_1$ factor.
Moreover, we may assume that $W^*(A)$ is diffuse by \Cref{cor P-infinite}.

Let $(\mathcal{M},\tau)$ be a type $\mathrm{II}_1$ factor with a unique normal faithful tracial state $\tau$.
Given a von Neumann subalgebra $\mathcal{A}$ of $\mathcal{M}$, the \emph{normalizer} $N_{\mathcal{M}}(\mathcal{A})$ of  $\mathcal{A}$ in $\mathcal{M}$ is of the form
\begin{equation*}  \label{normalizer}
    N_{\mathcal{M}}(\mathcal{A}) := \{V\in\mathcal{M}\colon V\mathcal{A}V^* = \mathcal{A},~ V~\text{unitary}\}.
\end{equation*}
Moreover, $\mathcal{A}$  is said to be a \emph{Cartan subalgebra} of  $\mathcal{M}$ if $N_{\mathcal{M}}(\mathcal{A})^{\prime\prime} = \mathcal{M}$.
If $\mathcal{A}$ and $\mathcal{B}$ are abelian von Neumann algebras such that $\mathcal{A}\subseteq\mathcal{B}\subseteq\mathcal{M}$, then
the \emph{relative normalizing set} $RN_{\mathcal{M}}(\mathcal{A},\mathcal{B})$ is defined as
\begin{equation}  \label{relative-normalizing-set}
    RN_{\mathcal{M}}(\mathcal{A},\mathcal{B}) := \{V\in\mathcal{M}\colon V\mathcal{A}V^*\subseteq\mathcal{B},~ V~\text{unitary}\}.
\end{equation}
We shall write $RN_{\mathcal{M}}(\mathcal{A})$ for $RN_{\mathcal{M}}(\mathcal{A},\mathcal{A})$. It is evident that $N_{\mathcal{M}}(\mathcal{A})\subseteq RN_{\mathcal{M}}(\mathcal{A})$ and they are not always equal.
Compared to \Cref{lem relative-commutant}, we have the following result.

\begin{lemma}\label{lem normalizer}
    Let $A$ and $B$ be self-adjoint operators in $\mathcal{B}(\mathcal{H})$. Suppose that $(\mathcal{M},\tau) = W^*(A+iB)$ is a type $\mathrm{II}_1$ factor and $\mathcal{A}$ is a diffuse von Neumann subalgebra of $W^*(A)$.
    Then for any self-adjoint compact operator $K$ in $\mathcal{B}(\mathcal{H})$, we have
    \begin{equation*}
        RN_{\mathcal{M}}(\mathcal{A},W^*(A))\subseteq W^*(A,B+K).
    \end{equation*}
\end{lemma}

\begin{proof}
    We adopt the same notation $\mathcal{D}_j$ and $\mathcal{D}$ as in the proof of \Cref{lem relative-commutant} with $\mathcal{D}=\bigcup_{j=1}^\infty\mathcal{D}_j$ and $\mathcal{D}$ a total subset of $\mathcal{H}$. Let $\xi$ be a unit vector in $\mathcal{D}$ and $\varepsilon>0$. Assume that $\xi=X\widehat{P_j}\oplus 0\in\mathcal{D}_j$ for some nonzero $X\in\mathcal{M}$ and $j\geqslant 1$.
    
    Let $V$ be a unitary operator in $RN_{\mathcal{M}}(\mathcal{A},W^*(A))$.
    By the Kaplansky density theorem, 
    there exists a non-commutative polynomial $p$ such that
    \begin{equation}\label{equ 2-tau-V}
        \|V-p(A,B)\|_{2,\tau}=\|(V-p(A,B))^*\|_{2,\tau}<\frac{\varepsilon}{2\|X\|}.
    \end{equation}
    Since $K$ is compact, we can write $p(A,B+K)=p(A,B)+K_0$, where $K_0$ is compact.
    
    Since $\mathcal{A}$ is diffuse, by \Cref{lem Haar-power}, there is a sequence $\{U_n\}_{n=1}^\infty$ of unitary operators in $\mathcal{A}$ weak-operator convergent to $0$.
    Let $W_n=VU_nV^*\in W^*(A)$.
    Then $\{W_n\}_{n=1}^\infty$ is also weak-operator convergent to $0$.
    By \Cref{lem KUn}, there exists an integer $N\geqslant 1$ such that for all $n\geqslant N$, we have
    \begin{equation}\label{equ-KUn}
        \|K_0U_n\xi\|<\frac{\varepsilon}{2}\quad\text{and}\quad \|K_0^*W_n\xi\|<\frac{\varepsilon}{2}.
    \end{equation}
    Note that $V=W_n^*VU_n$.
    It follows from \eqref{equ 2-tau-V} and \eqref{equ-KUn} that
    \begin{align*}
        \|(V-W_n^*p(A,B+K)U_n)\xi\|
        &\leqslant\|(V-W_n^*p(A,B)U_n)\xi\|+\|W_n^*K_0U_n\xi\|\\
        &=\|W_n^*(V-p(A,B))U_nXP_j\|_{2,\tau}+\|K_0U_n\xi\|\\
        &\leqslant\|V-p(A,B)\|_{2,\tau}\|X\|+\frac{\varepsilon}{2}<\varepsilon.
    \end{align*}
    Similarly, we have $\|(V-W_n^*p(A,B+K)U_n)^*\xi\|<\varepsilon$.
    Therefore, by \Cref{cor A-in-M}, we can obtain that $V\in W^*(A,B+K)$.
    This completes the proof.        
\end{proof}

\begin{proposition}\label{prop weak-Cartan}
    Let $A$ and $B$ be self-adjoint operators in $\mathcal{B}(\mathcal{H})$. Suppose that $\mathcal{M} = W^*(A+iB)$  is a type $\mathrm{II}_1$ factor and $\mathcal{A}$ is a diffuse von Neumann subalgebra of $W^*(A)$ such that $W^*\big({RN}_{\mathcal{M}}(\mathcal{A}, W^*(A) )\big) \cap W^*(B)^{\prime} \neq \mathbb{C}I.$
    
    Then for every $\varepsilon>0$, there exists a self-adjoint trace-class operator $K$ with $\|K\|_1<\varepsilon$ such that $A+i(B+K)$ is irreducible  in $\mathcal{B}(\mathcal{H})$.
\end{proposition}

\begin{proof}
    Let $P$ be a nontrivial projection in $W^*({RN}_{\mathcal{M}}(\mathcal{A}, W^*(A) ))  \cap W^*(B)^{\prime}$. By \Cref{lem normalizer}, for every self-adjoint compact operator $K$ in $\mathcal{B}(\mathcal{H})$, $P$ is in $W^*(A,B+K)$. Since $\mathcal{M}$ is  a type $\mathrm{II}_1$ factor, both $P$ and $I-P$ are infinite-dimensional projections in $\mathcal{B}(\mathcal{H})$. Thus, the proof can be finished by \Cref{lem 2-projections}.
\end{proof}

By applying \Cref{prop weak-Cartan}, we obtain the following proposition.

\begin{proposition}\label{prop Cartan}
    Let $A$ and $B$ be self-adjoint operators in $\mathcal{B}(\mathcal{H})$. Suppose that $\mathcal{M}=W^*(A+iB)$ is a type $\mathrm{II}_1$ factor  and $W^*(A)$ is a Cartan subalgebra of $\mathcal{M}$.
    Then for every $\varepsilon>0$, there is a self-adjoint trace-class operator $K$ with $\|K\|_1<\varepsilon$ such that $A+i(B+K)$ is  irreducible  in $\mathcal{B}(\mathcal{H})$.
\end{proposition}

Inspired by \Cref{prop Cartan}, we propose \Cref{conj II1-factor}.
To provide further support for \Cref{conj II1-factor}, we present, in the following example, an operator that generates a non-hyperfinite type $\mathrm{II}_1$ factor.

In the following example, by $\mathcal{L}(\mathbf{F}_r)$ we denote the \emph{interpolated free group factor} for $1<r\leqslant\infty$ (see \cite[Postscript]{Voi92}). By applying \cite[Corollary 5.7]{DSSW}, $\mathcal{L}(\mathbf{F}_r)$ can be generated by $\lceil r\rceil$ self-adjoint operators for $1<r\leqslant\infty$. Recall that for any real number $x$, let $\lfloor x\rfloor$ and $\lceil x\rceil$ be the floor function and the ceiling function of $x$, respectively.

\begin{example} \label{example-free-group}
    Let $\mathcal{L}(\mathbf{F}_p)$ act on a complex separable Hilbert space $\mathcal{H}$ for $1<p<2$. Then there are self-adjoint operators $A$ and $B$ generating $\mathcal{L}(\mathbf{F}_p)$ such that for any $\varepsilon>0$, there exists a self-adjoint trace-class operator $K$ with $\|K\|_1 \leqslant \varepsilon$ such that $A+i(B+K)$ is  a direct sum of irreducible operators.
\end{example}

\begin{proof}
    Let $n\geqslant 3$ be an integer such that $\lceil 1+n^2(p-1)\rceil\leqslant n^2$. Then $\mathcal{L}(\mathbf{F}_{1+n^2(p-1)})$ can be generated by $n^2$ positive operators
    \begin{equation*}
        A_j\ (1\leqslant j\leqslant n); \ B_{jk}\ (1\leqslant j\leqslant k\leqslant n, (j,k)\ne(1,1)); \ C_{jk} \ (3\leqslant j+2\leqslant k\leqslant n).
    \end{equation*}
    Furthermore, we can assume that $\sigma(A_j)\cap\sigma(A_k)=\varnothing$ for $j\ne k$, and each $B_{j,j+1}$ is invertible for $1\leqslant j \leqslant n-1$.
    For $1\leqslant j\leqslant k\leqslant n$, define 
    \begin{equation*}
        X_{jk}:=
        \begin{cases}
            0, & \text{for } (j,k) = (1,1), \\
            B_{jk}+iC_{jk}, & \text{for } 3\leqslant j+2\leqslant k\leqslant n, \\ 
            B_{jk}, & \mbox{otherwise}.
        \end{cases}
    \end{equation*}
    Let $X_{jk}=X_{kj}^*$ for $1\leqslant k< j \leqslant n$.
    We define
    \begin{equation*}
        A=\mathrm{diag}(A_1,A_2,\ldots,A_n),\quad B=(X_{jk})_{1\leqslant j,k\leqslant n}.
    \end{equation*}
    Then it is routine to verify that $W^*(A+iB) = \mathcal{L}(\mathbf{F}_p)={\mathbb M}^{}_{n}(\mathbb{C})\otimes\mathcal{L}(\mathbf{F}_{1+n^2(p-1)})$ (see \cite[Theorem 5.1]{Voi2016}). By considering the commutant of $\mathcal{L}(\mathbf{F}_p)$, we assume that $\mathcal{L}(\mathbf{F}_p)$ acts on $\mathbb{C}^n\otimes\mathcal{H}_0$, where $\mathcal{L}(\mathbf{F}_{1+n^2(p-1)})$ acts on $\mathcal{H}_0$ and has a generating unit vector $\xi$. 
    
    Write each operator of $\mathcal{L}(\mathbf{F}_p)$ as an $n\times n$ matrix with entries in the factor $\mathcal{L}(\mathbf{F}_{1+n^2(p-1)})$, and define a self-adjoint trace-class operator $K$ to be the  operator-valued  matrix with the $(1,1)$-entry being $ \varepsilon \xi\hat{\otimes}\xi$ on $\H_0$ and $0$ at all other entries. That is to say, we can write $K$ in the form
    \begin{equation*}
        K:=
        \begin{pmatrix}
            \varepsilon \xi\hat{\otimes}\xi & 0 & \cdots & 0 \\
            0 & 0 & \cdots & 0  \\
            \vdots & \vdots & \ddots & \vdots  \\
            0 & 0 & \cdots & 0 
        \end{pmatrix}.
    \end{equation*}
    Note that $(\xi,0,\ldots,0)$ is the generating vector for  $\mathcal{L}(\mathbf{F}_p)$.  A routine calculation ensures that $A+i(B+K)$ is irreducible on  $\mathbb{C}^n\otimes\mathcal{H}_0$.
\end{proof}

With the method adopted in \Cref{example-free-group}, we can construct a family of examples to support \Cref{conj II1-factor}.

\section{Proof of the Main Theorem}\label{section proof}

With these technical tools developed in the preceding sections, we are ready to prove \Cref{thm main} ({\bf Main Theorem}) in this section.

For every self-adjoint operator $A$, we denote by $E_A(\,\cdot\,)$ the {\em spectral measure} for $A$.
In the following lemma, we recall a classical technique of constructing an arbitrarily small self-adjoint trace-class perturbation $K$ of $A$ such that $\sigma(A+K)$ contains an isolated eigenvalue of multiplicity $1$.

\begin{lemma}\label{lem Re+C1}
	Let $A$ be a {self-adjoint} operator in $\mathcal{B}(\mathcal{H})$.
	Then for every $\varepsilon >0$, there exists a  {self-adjoint} finite rank operator $K$ such that
	\begin{enumerate}
		\item[$(1)$] $\|K\|_1<\varepsilon$, and

		\item [$(2)$] there is an isolated eigenvalue of $A+K$ with multiplicity $1$.
	\end{enumerate}
\end{lemma}

\begin{proof}
	Without loss of generality, we assume that $A$ is positive and $\|A\|=1$.
	In this case, we have $1\in\sigma(A)$.
	Define a Borel set $\Delta_\varepsilon$ of the form
	\begin{equation*}
  		\Delta_\varepsilon:=\sigma(A)\cap[1- \textstyle\frac{\varepsilon}{4},1].
	\end{equation*}
	By assumption, $E_A(\Delta_{\varepsilon})$ is a nonzero spectral projection for $A$ and
	\begin{equation*}
  		\|(A-I)E_A(\Delta_{\varepsilon})\|\leqslant\frac{\varepsilon}{4}.
	\end{equation*}
	Choose a unit vector $\xi$  in $\operatorname{ran} E_A(\Delta_\varepsilon)$ and denote by $F_\xi$  the rank-one projection $\xi\hat{\otimes}\xi$ onto $\mathbb{C}\xi$. Then we have
	\begin{align*}
  		\|F_\xi(A-I)F_\xi\|_1
  		\leqslant\|(A-I)F_\xi\|_1
  		&=\|(A-I)E_A(\Delta_\varepsilon)F_\xi\|_1 \\
  		&\leqslant\|(A-I)E_A(\Delta_\varepsilon)\|\|F_\xi\|_1
 	 	\leqslant\frac{\varepsilon}{4}.
	\end{align*}
	Since $F_\xi(A-I)=((A-I)F_\xi)^*$, we have $\|F_\xi(A-I)\|_1=\|(A-I)F_\xi\|_1$.
	Define a finite rank operator $K$ by
	\begin{equation*}
	    K:=\frac{\varepsilon}{4}F_\xi+F_\xi(A-I)F_\xi-F_\xi(A-I)-(A-I)F_\xi.
	\end{equation*}
	Then $\|K\|_1\leqslant\varepsilon$ and $A+K$ is in the form
	\begin{equation}\label{equ spectrum-FAF}
	    A+K=(1+\frac{\varepsilon}{4})F_\xi+(I-F_\xi)A(I-F_\xi).
	\end{equation}
	Note that $\|(I-F_\xi)A(I-F_\xi)\|\leqslant\|A\|=1$.
	From the restriction of $(I-F_\xi)A(I-F_\xi)$ on $\operatorname{ran}(I-F_\xi)$, we have
	\begin{equation}\label{equ spectrum-FPAFP}
	    \sigma((I - F_\xi)A(I - F_\xi))\subseteq [0,1].
	\end{equation}
	By \eqref{equ spectrum-FAF} and \eqref{equ spectrum-FPAFP},  $1+\frac{\varepsilon}{4}$ is an isolated eigenvalue of $A+K$ with multiplicity $1$.
	This completes the proof.
\end{proof}

\begin{proof}[\bf Proof of \Cref{thm main}]
	Let $T$ be an operator in $\mathcal{B}(\mathcal{H})$ and $\varepsilon>0$. It is sufficient to prove $(3) \Rightarrow (1)$.
	The proof is divided into 4 steps.

	{\bf Step 1.}
	By \Cref{lem Re+C1} and the type decomposition theorem for von Neumann algebras, there is a self-adjoint trace-class operator $K_0$ satisfying $\|K_0\|_1<\frac{\varepsilon}{4}$ such that $A:=\operatorname{Re}T+K_0$ and $B:=\operatorname{Im} T$ are of the form
	\begin{equation*}
  	A=
  	\begin{pmatrix}
	    \alpha & 0  & 0  & 0 \\
	    0 & A_1  & 0 & 0 \\
	    0 & 0 & A_2  & 0 \\
	    0 & 0 & 0 & A_\infty \\
  	\end{pmatrix} \quad \text{ and } \quad
  	B=
  	\begin{pmatrix}
	    \beta & \xi^*_0  & \xi^*_1  & \xi^*_2 \\
	    \xi_0 & B_1  & 0 & 0 \\
		\xi_1 & 0 & B_2  & 0 \\
	    \xi_2 & 0 & 0 & B_\infty \\
  	\end{pmatrix}
  	\begin{array}{l}
	    \operatorname{ran} E\\
	    \mathcal{H}_1 \\
	    \mathcal{H}_2 \\
	    \mathcal{H}_\infty \\
  	\end{array},
	\end{equation*}
	where
	\begin{enumerate}
	\item [$(1)$] $\alpha$ is an isolated eigenvalue of $A$ with multiplicity $1$, and $E$ is the rank-one spectral projection of $A$ corresponding to $\{\alpha\}$;

	\item [$(2)$] $W^*(A_1+iB_1)$ is a finite type $\mathrm{I}$ von Neumann algebra in $\mathcal{B}(\mathcal{H}_1)$;

	\item [$(3)$] $W^*(A_2+iB_2)$ is a type $\mathrm{II}_1$ von Neumann algebra in $\mathcal{B}(\mathcal{H}_2)$;

	\item [$(4)$]  $W^*(A_\infty+iB_\infty)$ is a properly infinite von Neumann algebra in $\mathcal{B}(\mathcal{H}_\infty)$.
	\end{enumerate}

	{\bf Step 2.}
	If $\mathcal{H}_1=\{0\}$, then let $K_1=0$.
	Otherwise, by \Cref{prop-type-I}, there is a trace-class operator $K_1$ in $\mathcal{B}(\mathcal{H}_1)$ with $\|K_1\|_1<\frac{\varepsilon}{4}$ such that $(A_1+iB_1)+K_1$ is irreducible in $\mathcal{B}(\mathcal{H}_1)$.
	Moreover, we can require that $\alpha\notin\sigma(A_1+\operatorname{Re}K_1)$.

	{\bf Step 3.}
	If $\mathcal{H}_2=\{0\}$, then let $K_2=0$.
	Otherwise, by \Cref{conj II1-factor} and \Cref{prop type-II}, there is a trace-class operator $K_2$ in $\mathcal{B}(\mathcal{H}_2)$ with $\|K_2\|_1<\frac{\varepsilon}{4}$ such that $(A_2+iB_2)+K_2$ is a direct sum of at most countably many irreducible operators, and $\alpha\notin\sigma(A_2+\operatorname{Re}K_2)$.

	{\bf Step 4.}
	With \textbf{Step 2} and \textbf{Step 3}, we obtain that the operator
	\begin{equation*}
  		\begin{pmatrix}
    		(A_1+iB_1)+K_1 & 0 \\
    		0 & (A_2+iB_2)+K_2
  		\end{pmatrix}
  	\begin{array}{l}
    	\mathcal{H}_1 \\
    	\mathcal{H}_2 \\
  	\end{array}
	\end{equation*}
	is a direct sum $(\bigoplus_{j\in J_1}X_j)\oplus(\bigoplus_{j\in J_2}Y_j)$ of at most countably many irreducible operators, where each $X_j$ acts on a finite-dimensional Hilbert space and each $Y_j$ acts on an infinite-dimensional Hilbert space.
	By \Cref{lem direct-sum-PI-VNA}, $(\bigoplus_{j\in J_2}Y_j)\oplus(A_\infty+iB_\infty)$ generates a properly infinite von Neumann algebra or vanishes.

	Note that $\alpha E\oplus\operatorname{Re}(\bigoplus_{j\in J_1}X_j)$ is diagonal and $E\ne\{0\}$. If the direct summand $(\bigoplus_{j\in J_2}Y_j)\oplus(A_\infty+iB_\infty)$ vanishes, then the proof is finished by \Cref{lem diagonalizable}. Otherwise,
	by applying \Cref{lemma pre-main}, there is a trace-class operator $K_3$ with $\|K_3\|_1 < \frac{\varepsilon}{4}$ such that $T+K$ is irreducible on $\mathcal{H}$, where $K=K_0+K_1+K_2+K_3$ and  $\|K\|_1<\varepsilon$.
	This completes the proof.
\end{proof}

\section*{Declarations}
\noindent{\bf Conflict of interest}
On behalf of all authors, the corresponding author states that there is no conflict of interest.

\section*{Acknowledgments}
The authors gratefully acknowledge the anonymous referees for their insightful comments and constructive suggestions, as well as research support from Dalian University of Technology during the preparation of this manuscript.

\end{document}